\newtheorem{theorem}{Theorem}[section]
\newtheorem{lemma}[theorem]{Lemma}
\newtheorem{proposition}[theorem]{Proposition}
\newtheorem{corollary}[theorem]{Corollary}
\theoremstyle{definition}
\newtheorem{example}[theorem]{Example}
\theoremstyle{remark}
\newtheorem*{theorem*}{{\bf Theorem}}
\newtheorem*{assumption*}{{\bf Assumption}}
\let\phi=\varphi
\def\R{\mathbb{R}}
\def\0{\mathbf{0}}
\def\CB{B_+^\circ}
\newcommand{\comment}[1]{}
\newcommand{\norm}[1]{\left\Vert #1 \right\Vert}
\numberwithin{equation}{section}
\let\epsilon=\varepsilon
\def\@maketitle{%
  \newpage
  \null
  \vskip 2em%
  \begin{center}%
  \let \footnote \thanks
    {\Large\bfseries \@title \par}%
    \vskip 1.5em%
    {\normalsize
      \lineskip .5em%
      \begin{tabular}[t]{c}%
        \@author
      \end{tabular}\par}%
    \vskip 1em%
    {\normalsize \@date}%
  \end{center}%
  \par
  \vskip 1.5em}
\begin{document}

\title{\sc \huge Order isomorphisms between cones of JB-algebras}

\author{Hendrik van Imhoff%
\thanks{Email: \texttt{hvanimhoff@gmail.com}}}
\affil{Mathematical Institute, Leiden University, P.O.Box 9512, 2300 RA Leiden, The Netherlands}

\author{Mark Roelands%
\thanks{Email: \texttt{mark.roelands@gmail.com}}}
\affil{School of Mathematics, Statistics \& Actuarial Science, University of Kent, Canterbury, CT2 7NX,
United Kingdom}

\maketitle
\date{}
%\date{\today}

\begin{abstract}
In this paper we completely describe the order isomorphisms between cones of atomic JBW-algebras. Moreover, we can write an atomic JBW-algebra as an algebraic direct summand of the so-called engaged and disengaged part. On the cone of the engaged part every order isomorphism is linear and the disengaged part consists only of copies of $\mathbb{R}$. Furthermore, in the setting of general JB-algebras we prove the following. If either algebra does not contain an ideal of codimension one, then every order isomorphism between their cones is linear if and only if it extends to a homeomorphism, between the cones of the atomic part of their biduals, for a suitable weak topology.
\end{abstract}

{\small {\bf Keywords:} Order isomorphisms, atomic JBW-algebras, JB-algebras.}

{\small {\bf Subject Classification:} Primary 46L70; Secondary 46B40. }

%\tableofcontents
\section{Introduction}
Fundamental objects in the study of partially ordered vector spaces are the order isomorphisms between, for example, their cones or the entire spaces. By such an order isomorphism we mean an order preserving bijection with an order preserving inverse, which is not assumed to be linear. It is therefore of particular interest to be able to describe these isomorphisms and know for which partially ordered vector spaces the order isomorphisms between their cones are in fact linear. 

Questions regarding the behaviour of order isomorphisms between cones originated from Special Relativity. Minkowski space time is partially ordered by the causal relationship between events. Studying the structure of  order isomorphisms that preserve the causal ordering between past and future distinguishing space times played a central role in Relativity theory. Alexandrov and Ovi\v{c}innikova proved in \cite{AO} and Zeeman proved in \cite{Z} that order isomorphisms from the causal cone to itself are linear. 

Later in \cite{A}, Alexandrov extended his result to finite dimensional partially ordered vector spaces where every extreme ray of the cone is engaged. Here an extreme ray is considered engaged whenever it lies in the linear span of other extreme rays. A similar result was proved by Rothaus in \cite{R} for order isomorphisms mapping between the interior of the cones instead of the whole cones. Significant contributions to the study on the linearity of order isomorphisms were made by Noll and Sch\"{a}ffer in \cite{NS1}, \cite{NS2}, \cite{S1}, and \cite{S2}, where they considered the more general setting of cones in infinite dimensional spaces that equal the sum of their engaged extreme rays. More recently, in \cite{ILG} it is shown that this condition by Noll and Sch\"{a}ffer can be reduced further by only requiring that any element of the cone can be written as an infimum of suprema of positive linear combinations of engaged extreme vectors, see Theorem \ref{thrm,ilg}.

An important class of partially ordered vector spaces are the self-adjoint parts of $C$*-algebras, or more generally, JB-algebras. The self-adjoint part of a $C$*-algebra equipped with the Jordan product 
\[
x \circ y:={\textstyle \frac{1}{2}}(xy+yx) 
\]
tuns it into a JB-algebra. It is interesting that order isomorphisms in JB-algebras, which a priori only preserve the partial order, often also preserve the underlying Jordan algebraic structure of the space as well. It follows from a theorem of Kadison's in \cite{K} that a linear order isomorphism between $C$*-algebras mapping the unit to the unit is in fact is a $C$*-isomorphism, or a Jordan isomorphism between the self-adjoint parts of the $C$*-algebras. Moln\'{a}r studied order isomorphisms on the cone of positive semi-definite operators on a complex Hilbert space in \cite{M} and proved that they must be linear. The linearity of the order isomorphism, in turn, then yields a Jordan homomorphism on the self-adjoint operators. An interesting problem to solve is to classify the JB-algebras for which all order isomorphism on the cones are automatically linear. The analogue for JB-algebras of the theorem of Kadison's, proved by Isidro and Rodr\'{i}gues-Palacios in \cite{IRP}, will then also yield a Jordan isomorphism if the unit is mapped to the unit. 

Extreme vectors in the cone of a JBW-algebra (the Jordan analogue of a von Neumann algebra) correspond precisely to the minimal projections, or atoms. Since elements in the cones of atomic JBW-algebras are the supremum of positive linear combinations of orthogonal atoms, the setting of atomic JBW-algebras is adopted to investigate how the results in \cite{ILG} can be used to describe the order isomorphisms on their cones. In section 3 we show that an atomic JBW-algebra has an algebraic decomposition into a part that contains all engaged atoms and a part that contains the disengaged atoms, those atoms that are not engaged, Proposition~\ref{jbw}. We proceed to show that an order isomorphism between cones in atomic JBW-algebras is linear on the engaged part of the algebra according to this decomposition, Theorem~\ref{thrm,ajbw}. The disengaged atoms precisely correspond to the algebraic direct summands of dimension one and this allows to completely describe all the order isomorphisms between the cones of atomic JBW-algebras, Theorem~\ref{t:order isoms on atomic JBW}.

In section 4 we study how the results from section 3 can be extended to order isomorphisms between cones in general JB-algebras. The idea is to embed a JB-algebra into the atomic part of its bidual and investigate under which conditions the order isomorphism extends to an order isomorphism between the cones in the atomic part of the biduals. It turns out that if the order isomorphism extends to a $\sigma$-weak homeomorphism, then this extension is an order isomorphism, Proposition~\ref{extoiso}. Since a JB-algebra does not have any atoms in general, the existence of disengaged atoms in the bidual is related to the existence of ideals of codimension one. We conclude the section by proving a characterization for when the order isomorphisms between cones of these JB-algebras are linear, Theorem~\ref{t:main theorem jb}.

\section{Preliminaries}

\subsection{Partially ordered vector spaces}

Let $X$ be a real vector space.
A subset $C\subseteq X$ is a cone whenever $x,y\in C$ and $\lambda,\mu\geq 0$ imply $\lambda x+\mu y\in C$ and $C\cap -C=\{0\}$. The cone $C$ induces a partial ordering on $X$ by defining $x \le_C y$ if $y-x\in C$. The pair $(X,C)$ is called a partially ordered vector space whose partial order we often denote simply by $\leq$ instead of $\leq_C$ if no confusion can arise.
In this order $C$ equals exactly the set of all positive elements, i.e., $x\in C$ if and only if $0\leq x$.
We use the notation $(X,X_+)$ for a partially ordered vector space where the order is determined by the cone of positive elements $X_+$. The cone $X_+$ is called {\it generating} if every element in $X$ can be written as the difference of two positive vectors, that is, $X=X_+-X_+$. A partially ordered vector space $(X,X_+)$ is called {\it Archimedean} if for every $x,y\in X_+$ with $nx\leq y$ for all $n\in\mathbb{N}$ one has $x\leq 0$.

Let $(X,X_+)$ and $(Y,Y_+)$ be partially ordered vector spaces. For $\Omega\subseteq X$ and $\Theta\subseteq Y$ a map $f\colon \Omega\to \Theta$ is called {\it order preserving} if $x,y\in\Omega$ with $x\le y$ implies $f(x)\le f(y)$. If in addition $f$ is bijective and its inverse is order preserving as well, we call $f$ an {\it order isomorphism}. Furthermore, we say that $\Omega$ and $\Theta$ are {\it order isomorphic} in case there exists an order isomorphism $f\colon \Omega\to \Theta$. 

A set $F\subseteq X_+$ is called a {\it face} if it is convex and satisfies the property that if $tx+(1-t)y\in F$ for some $x,y\in X_+$ and $0<t<1$, then $x,y\in F$. For a non-empty subset $S\subseteq X_+$, the face generated by $S$ is the smallest face of $X_+$ that contains $S$ and is denoted by $\mathrm{face}(S)$.

A vector $x\in X_+$ is called an {\it extreme vector} if $0\leq y \leq x$ implies that $y=\lambda x$ for some $\lambda\geq 0$.
The ray spanned by a vector $x\in X_+$ is the set $R_x:=\{\lambda x\colon\lambda\geq 0\}$.
Whenever $x\in X_+$ is an extreme vector we refer to $R_x$ as an {\it extreme ray}. Note that one-dimensional faces of $X_+$ are exactly the extreme rays of the cone $X_+$. In the study of order isomorphism the extreme rays play an important role.
Namely, extreme rays can be characterized in a purely order theoretic way \cite[Proposition~1]{NS2} in Archimedean partially ordered vector spaces. Consequently, we have the following lemma.

\begin{lemma}\label{l:extreme vectors bijection}
Suppose $(X,X_+)$ and $(Y,Y_+)$ are Archimedean partially ordered vector spaces and let $f\colon X_+\to Y_+$ be an order isomorphism. Then $f$ maps the extreme rays of $X_+$ bijectively onto the extreme rays of $Y_+$.
\end{lemma}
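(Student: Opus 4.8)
The plan is to reduce the statement to the order-theoretic characterization of extreme rays cited from \cite[Proposition~1]{NS2}. First I would recall precisely what that characterization says: in an Archimedean partially ordered vector space, the extreme rays of the cone are exactly the sets of the form $R_x$ for $x \neq 0$ which can be singled out by a property phrased entirely in terms of the order $\leq$ (for instance, that the order interval $[0,x]$ is totally ordered, equivalently that $x$ is an extreme vector in the sense defined above). The key point is that this description makes no reference to the linear structure beyond what is already encoded in $X_+$ and $\leq$.

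Next I would argue that an order isomorphism $f \colon X_+ \to Y_+$ transports this property. Since $f$ is a bijection that preserves $\leq$ in both directions, it maps order intervals to order intervals: $f([0,x]) = [0,f(x)]$ for every $x \in X_+$, because $0 \leq y \leq x$ holds iff $0 = f(0) \leq f(y) \leq f(x)$. Hence $f$ restricts to an order isomorphism between $[0,x]$ and $[0,f(x)]$, so $[0,x]$ is totally ordered precisely when $[0,f(x)]$ is. Therefore $x$ is an extreme vector of $X_+$ if and only if $f(x)$ is an extreme vector of $Y_+$. Applying the same reasoning to $f^{-1}$ gives the reverse implication, so $f$ induces a bijection between the extreme vectors of $X_+$ and those of $Y_+$.

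Finally I would promote this from extreme vectors to extreme rays. For an extreme vector $x$, the extreme ray $R_x$ is the smallest face of $X_+$ containing $x$ (a one-dimensional face), and one must check that $f$ maps $R_x$ onto $R_{f(x)}$. One clean way: since $f(0)=0$ and $f$ preserves order, for $\lambda \geq 0$ we have $\lambda x \in [0, \mu x]$ for suitable $\mu$, so $f(\lambda x) \in [0, f(\mu x)] = [0, \mu' f(x)]$ which, as $[0,\mu' f(x)]$ is a subset of the totally ordered set lying in $R_{f(x)}$, forces $f(\lambda x) \in R_{f(x)}$; thus $f(R_x) \subseteq R_{f(x)}$, and the symmetric argument for $f^{-1}$ gives equality. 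I expect the main obstacle to be purely expository: stating the order-theoretic characterization of extreme rays from \cite{NS2} in a form convenient for this transport argument, and being careful that "totally ordered order interval" (or whichever equivalent order property is used) is genuinely preserved by order isomorphisms including the behaviour at the endpoints $0$ and $x$. Once that is pinned down, the rest is a short diagram-chase with order intervals.
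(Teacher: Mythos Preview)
Your proposal is correct and follows exactly the approach the paper intends: the paper does not give a proof of this lemma at all, but simply remarks that extreme rays admit a purely order-theoretic characterization by \cite[Proposition~1]{NS2} and states the lemma as an immediate consequence. Your write-up is a faithful unpacking of that one-line justification; the only step you might tighten is the claim $f(\mu x)=\mu' f(x)$, which follows because $0\le f(x)\le f(\mu x)$ and $f(\mu x)$ is extreme, hence $f(x)$ is a positive multiple of $f(\mu x)$.
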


An extreme ray $R_x$ is called an {\it engaged extreme ray} of $X_+$ if it lies in the linear span of the other extreme rays of $X_+$, that is, the extreme vector $x$ that generates the ray can be written as a linear combination of extreme vectors in $X_+\backslash R_x$. Extreme rays that are not engaged are called {\it disengaged extreme rays}. 

A subset $\Omega\subseteq X$ is called an {\it upper set} whenever $x\in \Omega$ and $x\leq y$ imply $y\in \Omega$.
Important examples of upper sets in partially ordered vector space are the whole space and the cone. For a subset $\Omega\subseteq X$ we define the {\it sup hull} of $\Omega$ as the set of elements $x\in X$ for which there is an index set $I$ and $\{x_i\colon i\in I\}\subseteq\Omega$ such that $x=\sup_{i\in I}x_i$. The {\it inf hull} of $\Omega$ is defined analogously, and the {\it inf-sup hull} of $\Omega$ is defined inductively to be the inf hull of the sup hull of $\Omega$. 

We state Theorem 3.9 from \cite{ILG} which provides a necessary condition on the cones in partially ordered vector spaces guaranteeing that all order isomorphisms between these cones are linear. Here a map $f\colon X_+\to Y_+$ is said to be linear if there is a linear map $T\colon X\to Y$ such that $T$ coincides with $f$ on $X_+$. 
\begin{theorem}\label{thrm,ilg} Suppose $(X,X_+)$ and $(Y,Y_+)$ are Archimedean partially ordered vector spaces, and let $f\colon X_+\to Y_+$ be an order isomorphism. Then $f$ is linear on the inf-sup hull of the positive linear span of the engaged extreme rays in $X_+$. 
\end{theorem}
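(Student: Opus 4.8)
The plan is to argue in two stages. Write $P$ for the positive linear span of the engaged extreme rays of $X_+$ and $H$ for its inf--sup hull. First I would show that $f$ restricted to $P$ agrees with a linear map $T\colon\Span P\to Y$, and then I would propagate this to $H$ using that order isomorphisms commute with suprema and infima wherever these exist.

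For the first stage, recall from Lemma \ref{l:extreme vectors bijection} that $f$ carries extreme rays bijectively onto extreme rays, so for each extreme vector $e$ there is a strictly increasing bijection $\gamma_e\colon[0,\infty)\to[0,\infty)$ fixing $0$ with $f(\lambda e)=\gamma_e(\lambda)f(e)$. The key is to show that $f$ \emph{transports} the linear dependences that witness engagement. Such a dependence can be rewritten as $e_0+\sum_k a_k e_k=\sum_\ell b_\ell e'_\ell=:p$ with all coefficients positive and all occurring extreme rays distinct. I would first give an order-theoretic description of the balancing element $p$ purely in terms of the extreme vectors on one side of the identity --- for instance as their least upper bound, which is what one sees already in the prototype ``square cone'' in $\R^3$; then, since $f$ preserves order intervals and existing suprema, one can read off $f(p)$ as the corresponding combination of the $f$-images of those extreme vectors, which yields a transported identity $f(e_0)+\sum_k a'_k f(e_k)=\sum_\ell b'_\ell f(e'_\ell)$ among the image rays. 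Comparing the transported identity with its rescalings links the gauges $\gamma_{e_0}$, $\gamma_{e_k}$, $\gamma_{e'_\ell}$ to one another; running this over a spanning family of engaged extreme vectors and chaining the resulting relations forces each relevant $\gamma_e$ to be linear, $\lambda\mapsto c_e\lambda$, and produces a well-defined linear map $T$ on $\Span P$ with $T=f$ on $P$.

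I expect this first stage to be the main obstacle. Transporting a single dependence only says that some configuration is carried to some configuration, so the real work lies in (i) establishing the order-theoretic characterisation of the balancing element $p$ in enough generality, and (ii) playing several dependences off against one another to upgrade ``some configuration'' to genuine linearity, in particular to pin down the one-dimensional gauges $\gamma_e$. This over-determination is precisely what the engagement hypothesis supplies, and it is the technical core inherited from the Alexandrov and Noll--Sch\"affer circle of ideas.

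For the second stage, note that any order isomorphism preserves existing suprema and infima: applying $f$ and $f^{-1}$ to $x=\sup_i x_i$ gives $f(x)=\sup_i f(x_i)$, and dually for infima. Hence if $x\in H$, say $x=\inf_j\sup_i p_{ij}$ with all $p_{ij}\in P$, then $f(x)=\inf_j\sup_i f(p_{ij})=\inf_j\sup_i T(p_{ij})$. Since $P$ is closed under addition and positive scaling and $T$ is linear and order preserving on $P$, one can match sums, scalar multiples, and order relations of such representations through $T$, which shows that $f$ is additive and positively homogeneous on $H$ where these operations make sense and extends to a linear map on $\Span H$ agreeing with $f$ on $H$. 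This stage is more routine once the first is in hand, and it completes the proof.
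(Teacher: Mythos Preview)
The paper does not prove this theorem; it is quoted from \cite{ILG} (as Theorem~3.9 there) and used as a black box. So there is no in-paper proof to compare your proposal against.

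Evaluating your sketch on its own merits: the overall two-stage architecture is the right one and is indeed in the spirit of Noll--Sch\"affer, but both stages, as written, have genuine gaps.

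In the first stage you assert that the balancing element $p=e_0+\sum_k a_k e_k=\sum_\ell b_\ell e'_\ell$ admits a purely order-theoretic description ``for instance as their least upper bound''. This is generally false: already in a three-dimensional Lorentz cone a sum of two distinct extreme vectors is not their supremum (the supremum need not exist). You flag this as ``the main obstacle'' but do not resolve it, and without it the transport of linear dependences does not get off the ground. The actual argument in the Noll--Sch\"affer/\cite{ILG} line does not characterise $p$ as a lattice operation; it proceeds via a careful analysis of the \emph{rank} of finite configurations of extreme vectors and the observation that order isomorphisms preserve the face lattice (hence the dimension of the face generated by a finite set of extreme vectors), which is what forces the individual gauges $\gamma_e$ to be linear. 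Your chaining idea is morally right, but the mechanism you propose for producing the first transported identity does not work.

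In the second stage you compute $f(x)=\inf_j\sup_i T(p_{ij})$ correctly, but the final sentence ``one can match sums, scalar multiples, and order relations \ldots\ which shows that $f$ is additive and positively homogeneous on $H$'' is where the argument stalls. Knowing $f=T$ on $P$ and $f(x)=\inf\sup T(p_{ij})$ does not by itself give $f(x)=T(x)$ for $x\in H$, because a linear $T$ need not commute with suprema and infima unless you first know $T$ is itself an order isomorphism between the relevant spans. What is missing is the observation (which you must prove) that $f$ maps engaged extreme rays of $X_+$ onto engaged extreme rays of $Y_+$; this makes $T$ a linear order isomorphism from $\Span P_X$ onto $\Span P_Y$, after which $T$ \emph{does} preserve existing suprema and infima and the identity $f=T$ propagates to $H$.
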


Let $(X,X_+)$ be a partially ordered vector space.
A vector $u\in X_+$ is called an {\it order unit} whenever for all $x\in X$ there exists a $\lambda\geq 0$ such that $-\lambda u\leq x\leq \lambda u$.
The formula $$\|x\|_u=\inf\{\lambda\geq 0:-\lambda u\leq x\leq\lambda u\},$$ defines a norm whenever $(X,X_+)$ is Archimedean.
In this case, $\|\cdot\|_u$ is called the {\it order unit norm}.
A triple $(X,X_+,u)$ is an {\it order unit space} if $(X,X_+)$ is an Archimedean partially ordered vector space and $u\in X_+$ is an order unit.
The cone $X_+$ is closed with respect to the order unit norm.
We denote the interior of $X_+$ by $X_+^\circ$, and we remark that $X_+^\circ$ is an upper set.
It follows that $X_+^\circ$ consists exactly of the order units of $X_+$, and that in an order unit space the cone is generating.

\subsection{JB-algebras}

A \emph{Jordan algebra} $(A, \circ)$ is a commutative, not necessarily associative algebra such that
\[
x \circ (y \circ x^2) = (x \circ y) \circ x^2 \mbox{\quad  for all }x,y \in A.
\]
A \emph{JB-algebra} $A$ is a normed, complete real Jordan algebra satisfying,
\begin{align*}
\norm{x \circ y} &\leq \norm{x}\norm{y}, \\
\norm{x^2} &= \norm{x}^2, \\
\norm{x^2} &\leq \norm{x^2 + y^2}
\end{align*}
for all $x,y \in A$. As mentioned in the introduction, an important example of a JB-algebra is the set of self-adjoint elements of a $C^*$-algebra equipped with the Jordan product $x \circ y := \frac{1}{2}(xy + yx)$. 

The elements $x,y \in A$ are said to \emph{operator commute} if $x \circ (y \circ z) = y \circ (x \circ z)$ for all $z \in A$. An element $x\in A$ is said to be \emph{central} if it operator commutes with all elements of $A$.

For JB-algebras $A$ with algebraic unit $e$, the \emph{spectrum} of $x \in A$, $\sigma(x)$, is defined to be the set of $\lambda \in \R$ such that $x - \lambda e$ is not invertible in JB$(x,e)$, the JB-algebra generated by $x$ and $e$, see \cite[Section~3.2.3]{OS}. Furthermore, there is a continuous functional calculus: JB$(x,e) \cong C(\sigma(x))$, see \cite[Corollary~1.19]{AS}. The cone of elements with non-negative spectrum is denoted by $A_+$, and equals the set of squares by the functional calculus, and its interior $A_+^\circ$ consists of all elements with strictly positive spectrum. This cone turns $A$ into an order unit space with order unit $e$, that is,
\[ \norm{x} = \inf \{ \lambda > 0: -\lambda e \leq x \leq \lambda e \}. \]

\begin{assumption*} Every JB-algebra under consideration in the sequel is unital with unit $e$.\end{assumption*}

The \emph{Jordan triple product} $\{ \cdot, \cdot, \cdot \}$ is defined as
\[ \{x,y,z \} := (x \circ y) \circ z + (z \circ y) \circ x - (x \circ z) \circ y, \]
for $x,y,z \in A$. The linear map $U_x \colon A \to A$ defined by $U_x y := \{x,y,x\}$ will play an important role and is called the \emph{quadratic representation} of $x$. In case $x$ is invertible, it follows that $U_x$ is an automorphism of the cone $A_+$ and its inverse is $U_{x^{-1}}$ by \cite[Lemma~1.23]{AS} and \cite[Theorem~1.25]{AS}. A {\it state} $\varphi$ of $A$ is a positive linear functional on $A$ such that $\varphi(e)=1$. The set of states on $A$ is called the \emph{state space} of $A$ and is $w$*-compact by the Banach-Alaoglu theorem and therefore must have sufficiently many extreme points by the Krein-Milman theorem. These extreme points are referred to as \emph{pure states} on $A$ (cf. \cite[A~17]{AS}).

A \emph{JBW-algebra} is the Jordan analogue of a von Neumann algebra: it is a JB-algebra with unit $e$ which is monotone complete and has a separating set of normal states, or equivalently, a JB-algebra that is a dual space. A state $\varphi$ on $M$ is  said to be {\it normal} if for any bounded increasing net $(x_i)_{i\in I}$ with supremum $x$ we have $\varphi(x_i)\rightarrow\varphi(x)$. The linear space of normal states on $M$ is called the {\it normal state space} of $M$. The topology on $M$ defined by the duality of $M$ and the normal state space of $M$ is called the {\em $\sigma$-weak topology}. That is, we say a net $(x_i)_{i\in I}$ converges $\sigma$-weakly to $x$ if $\varphi(x_i)\to \varphi(x)$ for all normal states $\varphi$ on $M$. The Jordan multiplication on a JBW-algebra is separately $\sigma$-weakly continuous in each variable and jointly $\sigma$-weakly continuous on bounded sets by \cite[Proposition~2.4]{AS} and \cite[Proposition~2.5]{AS}. Furthermore, for any $x$ the corresponding quadratic representation $U_x$ is $\sigma$-weakly continuous by \cite[Proposition~2.4]{AS}. If $A$ is a JB-algebra, then one can extend the Jordan product to its bidual $A^{**}$ turning $A^{**}$ into a JBW-algebra, see \cite[Corollary~2.50]{AS}. In JBW-algebras the spectral theorem \cite[Theorem~2.20]{AS} holds, which implies in particular that the linear span of projections is norm dense, see \cite[Proposition~4.2.3]{OS}. 

An element $p\in M$ is a {\it projection} if $p^2=p$. For a projection $p\in M$ the {\it orthogonal complement}, $e-p$, will be denoted by $p^\perp$ and a projection $q$ is {\it orthogonal} to $p$ precisely when $q\le p^\perp$, see \cite[Proposition~2.18]{AS}. The collection of projections forms a complete orthomodular lattice by \cite[Proposition~2.25]{AS}, which means in particular that every set of projections has a supremum. We remark that these sets of projections need not have a supremum in $M$. 

Any central projection $c$ decomposes the JBW-algebra $M$ as a direct sum of JBW-subalgebras such that $M=U_cM\oplus U_{c^\perp} M$, see \cite[Proposition~2.41]{AS}. A minimal non-zero projection is called an \emph{atom} and a JBW-algebra in which every non-zero projection dominates an atom is called \emph{atomic}. Furthermore, by \cite[Lemma~5.58]{AS} we have that the normal state space of an atomic JBW-algebra is the closed convex hull of the set of pure states of $M$, where a normal state $\varphi$ is considered {\it pure} whenever there exists an atom $p\in M$ such that $\varphi(p)=1$.

A standard application of Zorn's lemma shows that in an atomic JBW-algebra $M$ every non-zero projection $q$ dominates a maximal set of pairwise disjoint atoms $\mathcal{P}$. If we denote the finite subsets of such a maximal set by $\mathcal{F}$, it follows that $\mathcal{F}$ is directed by set inclusion and we obtain an increasing net $(p_F)_{F\in\mathcal{F}}$ where $p_F:=\sum_{p\in F}p$ for all $F\in\mathcal{F}$. This net has a least upper bound in $M$ since the normal states determine the order on $M$ by \cite[Corollary~2.17]{AS} and in fact

\begin{equation*}
\sup \{p_F\colon F\in\mathcal{F}\}=q.
\end{equation*} 
By \cite[Proposition~2.25]{AS} and \cite[Proposition~2.5]{AS} this net converges $\sigma$-weakly to a projection, say $r$. Suppose that there is an atom $s\le q-r$. Then $s$ and $r$ are orthogonal and so $s$ is orthogonal to all atoms $p$ where $\{p\}\in\mathcal{F}$, contradicting the maximality of $\mathcal{P}$. Hence $r=q$. This is a standard argument in the theory of JBW-algebras even without the presence of atoms. Nevertheless, the following lemma is useful in our study of atomic JBW-algebras, and is therefore recorded for future reference.

\begin{lemma}\label{l:net of projections}
Let $M$ be an atomic JBW-algebra and let $q\in M$ be a non-zero projection. Then there exists a maximal set $\mathcal{P}$ of pairwise disjoint atoms dominated by $q$, and the increasing net $(p_F)_{F\in\mathcal{F}}$ indexed by the finite subsets $\mathcal{F}$ of $\mathcal{P}$ such that $p_F:=\sum_{p\in F}p$ for all $F\in\mathcal{F}$ converges $\sigma$-weakly to its least upper bound $q$. 
\end{lemma}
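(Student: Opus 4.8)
The plan is to make precise the standard argument sketched in the paragraph preceding the statement. First I would invoke Zorn's lemma. The collection of sets of pairwise disjoint atoms dominated by $q$, ordered by inclusion, is non-empty: since $M$ is atomic and $q\neq 0$, the projection $q$ dominates some atom. Moreover the union of a chain of such sets is again a set of pairwise disjoint atoms dominated by $q$ (any two atoms in the union both lie in the larger of the two chain members containing them, hence are orthogonal). So a maximal such set $\mathcal{P}$ exists. The finite subsets $\mathcal{F}$ of $\mathcal{P}$ form a directed set under inclusion, and for $F\in\mathcal{F}$ the element $p_F:=\sum_{p\in F}p$ is a projection because the atoms in $F$ are pairwise orthogonal, with $p_F\leq q$; the net $(p_F)_{F\in\mathcal{F}}$ is then clearly increasing.

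Next I would produce the limit. Since the projections in $M$ form a complete orthomodular lattice by \cite[Proposition~2.25]{AS}, the family $\{p_F\colon F\in\mathcal{F}\}$ has a supremum $r$ among the projections, and as each $p_F\leq q$ we get $r\leq q$. An increasing bounded net in a JBW-algebra converges $\sigma$-weakly to its least upper bound, using the $\sigma$-weak continuity properties in \cite[Proposition~2.5]{AS} together with \cite[Proposition~2.25]{AS}; hence $(p_F)_{F\in\mathcal{F}}$ converges $\sigma$-weakly to $r$, and $r$ is also the least upper bound of the net in $M$ because normal states determine the order on $M$ by \cite[Corollary~2.17]{AS}.

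Finally I would show $r=q$. Since $r\leq q$ and both are projections, $q-r$ is again a projection, namely the orthocomplement of $r$ inside the JBW-subalgebra $U_q M$. If $q-r\neq 0$, atomicity of $M$ provides an atom $s\leq q-r$; then $s$ is orthogonal to $r$, hence to every $p\in\mathcal{P}$ (each such $p$ satisfies $p=p_{\{p\}}\leq r$), so $\mathcal{P}\cup\{s\}$ is a strictly larger set of pairwise disjoint atoms dominated by $q$, contradicting maximality of $\mathcal{P}$. Therefore $q-r=0$, i.e. $r=q$, and the net converges $\sigma$-weakly to its least upper bound $q$. I do not anticipate a genuine obstacle; the only points requiring care are checking that $q-r$ is a projection to which atomicity applies, and the bookkeeping that the lattice supremum of the $p_F$ agrees with their least upper bound in $M$.
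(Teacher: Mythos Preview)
Your proposal is correct and follows essentially the same approach as the paper, which gives the argument in the paragraph immediately preceding the lemma: Zorn's lemma for the maximal family, $\sigma$-weak convergence of the increasing net to a projection $r\leq q$ via \cite[Proposition~2.25]{AS} and \cite[Proposition~2.5]{AS} together with \cite[Corollary~2.17]{AS}, and then the maximality contradiction applied to an atom below $q-r$ to force $r=q$. Your version merely spells out a few bookkeeping details (non-emptiness, the chain condition, and that $q-r$ is a projection) that the paper leaves implicit.
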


\subsection{Linear order isomorphisms on JB-algebras}
The linear order isomorphisms between JB-algebras have been classified, which we will use to give a complete description of order isomorphisms between cones in atomic JBW-algebras. An important result to obtain the classification of the linear order isomorphisms is \cite[Theorem~1.4]{IRP}, which we state for the convenience of the reader. A {\em symmetry} is an element $s$ satisfying $s^2 = e$. Note that $s$ is a symmetry if and only if $p := (s+e)/2$ is a projection, and $s = p - p^\perp$.

\begin{theorem}[Isidro, Rodr{\'{\i}}guez-Palacios]\label{t:jbisoms}
Let $A$ and $B$ be JB-algebras. The bijective linear isometries from $A$ onto $B$ are the mappings of the form $x \mapsto s Jx$, where $s$ is a central symmetry in $B$ and $J \colon A \to B$ a Jordan isomorphism.
\end{theorem}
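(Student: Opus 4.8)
The plan is to reduce the forward implication to the unital case, after first disposing of the easy converse. For the converse, I would note that a Jordan isomorphism $J\colon A\to B$ fixes the unit and preserves the spectrum, hence is a linear isometry, while for a central symmetry $s=c-c^{\perp}$ (with $c$ a central projection) the multiplication operator $L_s$ acts as the identity on $U_cB$ and as minus the identity on $U_{c^{\perp}}B$, so it too is a linear isometry and $L_s^2=\mathrm{id}$; thus $x\mapsto sJx=L_sJx$ is a bijective linear isometry.

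For the forward implication, let $T\colon A\to B$ be a bijective linear isometry and put $s:=Te$. First I would show that $s$ is a symmetry: since $e$ is an extreme point of the closed unit ball of $A$, which $T$ maps bijectively onto the closed unit ball of $B$, the element $s$ is an extreme point of the latter, and a functional-calculus argument (an element whose spectrum is not contained in $\{-1,1\}$ can be split through a suitable continuous function on its spectrum) identifies the extreme points of the unit ball of a unital JB-algebra with its symmetries. Next I would invoke the Banach--Stone theorem for (real) JB*-triples: regarding $A$ and $B$ as real JB*-triples under the Jordan triple product, $T$ is a triple isomorphism, $T\{x,y,z\}=\{Tx,Ty,Tz\}$. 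Since $x\circ y=\{x,e,y\}$, this gives $T(x\circ y)=\{Tx,s,Ty\}$, i.e.\ $T$ is a Jordan isomorphism from $A$ onto the $s$-isotope $(B,\bullet_s)$ with product $a\bullet_s b:=\{a,s,b\}$, which has unit $s$; transporting the JB-structure of $A$ through $T$, the algebra $(B,\bullet_s)$ equipped with the original norm of $B$ is a JB-algebra.

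The heart of the argument is to promote $s$ to a \emph{central} symmetry. Writing $p:=(s+e)/2$ and using the Peirce decomposition $B=B_1(p)\oplus B_{1/2}(p)\oplus B_0(p)$, one computes $U_s=P_1-P_{1/2}+P_0$ and $a\bullet_s a=U_a s$, so the JB-norm identity $\|a\bullet_s a\|=\|a\|^2$ becomes $\|U_a s\|=\|a\|^2$ for all $a\in B$. I would argue by contradiction: if $B_{1/2}(p)\neq 0$, choose $0\neq b\in B_{1/2}(p)$ and test the identity on $a_t=p+tb$; using $s\circ b=0$ and the Peirce-$1$/Peirce-$0$ splitting $b^2=\beta_1+\beta_0$, one finds $U_{a_t}s=p+tb-t^2\beta_1+t^2\beta_0$ while $a_t^2=p+tb+t^2\beta_1+t^2\beta_0$, and for small $t>0$ these have different norms, a contradiction. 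Hence $B_{1/2}(p)=0$ and $s$ is central. I expect this step to be the main obstacle, since it requires test elements that detect the sign reversal $U_s$ imposes on $B_{1/2}(p)$ but which the norm does not register on homogeneous elements, so the choice $a_t=p+tb$ (and the estimate showing the two norms genuinely separate) is the delicate part.

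Finally, with $s=c-c^{\perp}$ central I would observe that $\bullet_s$ coincides with $\circ$ on $U_cB$ and with $-\circ$ on $U_{c^{\perp}}B$, with no cross terms, whence a direct computation gives $s\circ(a\bullet_s b)=(s\circ a)\circ(s\circ b)$; thus $L_s\colon(B,\bullet_s)\to(B,\circ)$ is a Jordan isomorphism carrying $s$ to $e$, and $L_s^2=\mathrm{id}$. Then $J:=L_s\circ T$ is a Jordan isomorphism $A\to B$ and $T=L_s\circ J$, that is $Tx=sJx$. Alternatively, once centrality of $s$ is established one may finish more quickly by noting that $L_sT$ is a \emph{unital} bijective linear isometry and applying the theorem of Wright and Youngson that unital bijective linear isometries between JB-algebras are Jordan isomorphisms.
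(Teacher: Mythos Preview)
The paper does not prove this theorem; it is quoted from \cite{IRP} and used as input for Corollary~\ref{orderisoms} and Proposition~\ref{p:order_isomorphism}. So there is no ``paper's own proof'' to compare against, and your proposal must be judged on its own merits.

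Your outline is broadly the right strategy and in fact tracks the original approach: identify $s=Te$ as a symmetry via the extreme-point characterisation, use triple-product preservation to recognise $T$ as a Jordan isomorphism onto the isotope $(B,\bullet_s)$, establish that $s$ is central, and finish via Wright--Youngson. The converse and the final reduction are fine. Two points deserve attention.

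First, invoking a Banach--Stone theorem for real JB*-triples is heavy and potentially anachronistic relative to the 1995 result you are proving; you should check that the version you need is independent of (and not a reformulation of) the Isidro--Rodr\'{\i}guez-Palacios theorem itself. In the original argument this step is handled more directly.

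Second, and more seriously, the centrality argument as written has a genuine gap. You correctly compute $a_t^2-U_{a_t}s=2t^2\beta_1$ with $\beta_1=U_p(b^2)$, but you then assert without justification that the norms differ for small $t$. Two things are missing: (i) you need $\beta_1\neq 0$, which is true but not automatic---working in the special JB-subalgebra generated by $p$ and $b$ (Shirshov--Cohn), one has $b=\left(\begin{smallmatrix}0&c\\c^*&0\end{smallmatrix}\right)$ and $\beta_1=\left(\begin{smallmatrix}cc^*&0\\0&0\end{smallmatrix}\right)$, which vanishes only if $b=0$; (ii) even granting $\beta_1\neq 0$, the claim $\|U_{a_t}s\|\neq\|a_t^2\|$ requires an actual estimate. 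One route: choose a state $\phi$ on $U_pB$ with $\phi(p)=1$ and $\phi(\beta_1)>0$; then $\phi(a_t^2)=1+t^2\phi(\beta_1)$, so $\|a_t^2\|\ge 1+t^2\phi(\beta_1)$, whereas a direct bound shows $\|U_{a_t}s\|=1+O(t^4)$ in the two-generated special subalgebra. You flag this as ``the delicate part'' but do not carry it out; as it stands the proposal is a plan rather than a proof at precisely the step that carries the content.
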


This theorem uses the fact that a bijective unital linear isometry between JB-algebras is a Jordan isomorphism, see \cite[Theorem~4]{WY}. We use this property of linear isometries to prove the following corollary.

\begin{corollary}\label{orderisoms}
Let $A$ and $B$ be order unit spaces, and $T \colon A \to B$ be a unital linear bijection. Then $T$ is an isometry if and only if $T$ is an order isomorphism. Moreover, if $A$ and $B$ are JB-algebras, then these statements are equivalent to $T$ being a Jordan isomorphism.
\end{corollary}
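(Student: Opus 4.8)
The plan is to establish the two equivalences in turn, the first using only order-unit-space structure and the second reducing to the cited classification of linear isometries between JB-algebras. For the equivalence of ``isometry'' and ``order isomorphism'' for a unital linear bijection $T$ between order unit spaces, the key observation is that positivity in an order unit space $(X,X_+,u)$ is detected by the order unit norm: for $x\in X$ one has $x\geq 0$ if and only if $\norm{\,\norm{x}u-x\,}\leq\norm{x}$. Indeed, if $x\geq 0$ then $0\leq\norm{x}u-x\leq\norm{x}u$, which gives the inequality; conversely the inequality forces $\norm{x}u-x\leq\norm{x}u$, that is, $x\geq 0$. Now if $T$ is a unital linear isometry, then using $\norm{Tx}=\norm{x}$ together with $\norm{\,\norm{x}e_B-Tx\,}=\norm{T(\norm{x}e_A-x)}=\norm{\,\norm{x}e_A-x\,}$, this characterization yields $Tx\geq 0\iff x\geq 0$; since the same applies to the unital linear isometry $T^{-1}$, both $T$ and $T^{-1}$ are positive, so $T$ is an order isomorphism.

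For the reverse implication I would use that the closed unit ball of an order unit space is exactly the order interval $[-u,u]$ (the cone being norm closed). If $T$ is a linear, unital order isomorphism, then $-e_A\leq x\leq e_A$ if and only if $-e_B\leq Tx\leq e_B$, so $T$ maps the closed unit ball of $A$ onto that of $B$, and by homogeneity of the norm $\norm{Tx}=\norm{x}$ for all $x$; thus $T$ is an isometry. This completes the first equivalence.

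It remains to bring in the Jordan structure when $A$ and $B$ are JB-algebras, and here I would invoke Theorem~\ref{t:jbisoms}. If $T$ is a (unital, bijective, linear) isometry, then $Tx=sJx$ for a central symmetry $s\in B$ and a Jordan isomorphism $J$; evaluating at $e_A$ and using that a Jordan isomorphism carries the unit to the unit gives $s=Te_A=e_B$, so $T=J$ is a Jordan isomorphism (this is precisely \cite[Theorem~4]{WY}). Conversely, any Jordan isomorphism $J\colon A\to B$ is of the form $x\mapsto sJx$ with the central symmetry $s=e_B$, hence a bijective linear isometry by Theorem~\ref{t:jbisoms}. Combined with the first part this yields the three-way equivalence. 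I do not expect a genuine obstacle: the argument is essentially bookkeeping once the metric description of the positive cone is available, the only external inputs being the closedness of the order unit cone and Theorem~\ref{t:jbisoms}; the single point needing care is stating the norm characterization of positivity correctly.
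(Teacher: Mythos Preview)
Your argument is correct and essentially follows the paper's route. The order-unit part is the same idea in slightly different dress: the paper normalises to $\norm{x}\le 1$ and uses $\norm{e-x}\le 1\Rightarrow Tx\ge 0$, while you package this as the homogeneous criterion $x\ge 0\iff\norm{\,\norm{x}u-x\,}\le\norm{x}$; the converse direction (order isomorphism $\Rightarrow$ isometry via $[-e,e]$) is identical. For the JB-algebra equivalence the paper goes directly: it cites \cite[Theorem~4]{WY} for ``isometry $\Rightarrow$ Jordan isomorphism'' and shows the converse by noting that a Jordan isomorphism preserves spectra and hence the norm $\norm{x}=\max|\sigma(x)|$. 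You instead deduce both implications from Theorem~\ref{t:jbisoms}, reading off $s=e_B$ from unitality. Both are valid; the paper's converse is slightly more self-contained (no appeal to the Isidro--Rodr{\'{\i}}guez-Palacios classification needed), whereas your route has the virtue of using only the theorem already stated in the paper.
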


\begin{proof}
Suppose $T$ is an isometry, and let $x \in A_+$, $\norm{x} \leq 1$. Then $\norm{e-x} \leq 1$, and so $\norm{e - Tx} \leq 1$, showing that $Tx$ is positive. So $T$ is a positive map, and by the same argument $T^{-1}$ is a positive map. (This argument is taken from the first part of \cite[Theorem 4]{WY}.)

Conversely, if $T$ is an order isomorphism, then $-\lambda e \leq x \leq \lambda e$ if and only if $-\lambda e \leq Tx \leq \lambda e$, and so $T$ is an isometry.

Now suppose that $A$ and $B$ are JB-algebras. If $T$ is an isometry, then $T$ is a Jordan isomorphism by \cite[Theorem 4]{WY}. Conversely, if $T$ is a Jordan isomorphism, then $T$ preserves the spectrum, and then also the norm since $\norm{x} = \max |\sigma(x)|$.
\end{proof}

\begin{proposition}\label{p:order_isomorphism}
Let $A$ and $B$ be JB-algebras. A map $T \colon A \to B$ is a linear order isomorphism if and only if $T$ is of the form $T = U_y J$, where $y \in \CB$ and $J\colon A\to B$ is a Jordan isomorphism. Moreover, this decomposition is unique and $y=(Te)^{1/2}$.
\end{proposition}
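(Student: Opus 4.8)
The plan is to reduce a general linear order isomorphism to the unital case already handled in Corollary~\ref{orderisoms}. First I would establish the "if" direction, which is routine: if $J$ is a Jordan isomorphism then $J$ is a linear order isomorphism (it preserves squares, hence the cone, and $J^{-1}$ is again a Jordan isomorphism), and if $y \in \CB$ then $U_y$ is an automorphism of the cone $B_+$ with inverse $U_{y^{-1}}$ by the facts recalled in the preliminaries; composing gives that $T = U_y J$ is a linear order isomorphism.

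For the "only if" direction, suppose $T \colon A \to B$ is a linear order isomorphism. The key observation is that $Te \in \CB$: indeed $e \in \CA$, so since $T$ and $T^{-1}$ are positive and $\CA$, $\CB$ are the interiors of the cones (equivalently, the sets of order units), an order isomorphism must carry order units to order units, so $Te$ is an order unit of $B$, i.e. $Te \in \CB$. Now set $y := (Te)^{1/2} \in \CB$, which is well-defined via the functional calculus since $Te$ has strictly positive spectrum. Then $U_{y^{-1}} = U_y^{-1}$ is a linear order isomorphism of $B$, so $S := U_{y^{-1}} T \colon A \to B$ is a linear order isomorphism; moreover $Se = U_{y^{-1}}(Te) = U_{y^{-1}}(y^2) = U_{y^{-1}} U_y e = e$, using $U_y e = y^2$ and $U_{y^{-1}} U_y = \mathrm{id}$. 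Thus $S$ is a \emph{unital} linear order isomorphism, so by Corollary~\ref{orderisoms} it is a Jordan isomorphism $J := S$. Then $T = U_y S = U_y J$, as required.

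It remains to prove uniqueness, which I expect to be the main (though still mild) obstacle. Suppose $U_y J = U_{y'} J'$ with $y, y' \in \CB$ and $J, J'$ Jordan isomorphisms. Applying both sides to $e$ and using $Je = e = J'e$ gives $U_y e = U_{y'} e$, i.e. $y^2 = y'^2$; since $y, y'$ both lie in $\CB$ and the square root is unique on elements with positive spectrum (functional calculus), we get $y = y'$. Cancelling $U_y$ (which is invertible) then forces $J = J'$. This also shows $y = (Te)^{1/2}$, since $Te = U_y Je = U_y e = y^2$. I would present the argument in roughly this order: the easy direction first, then the extraction of $y$ and reduction to the unital case via Corollary~\ref{orderisoms}, then uniqueness; the only point requiring care is the justification that $Te$ is an order unit, for which I would either invoke the characterization of $\CB$ as the set of order units directly or argue via the order unit norm estimates $-\lambda e \le x \le \lambda e \iff -\lambda Te \le Tx \le \lambda Te$.
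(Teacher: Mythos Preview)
Your proposal is correct and follows essentially the same route as the paper: reduce to the unital case by composing with $U_{(Te)^{-1/2}}$ and then invoke Corollary~\ref{orderisoms}, with uniqueness obtained from $Te = U_y e = y^2$. The paper's proof is more terse (it does not spell out that $Te\in\CB$ or the computation $U_{y^{-1}}(y^2)=e$), but the argument is the same.
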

\begin{proof}
If $T$ is of the above form, then $T$ is an order isomorphism as a composition of two order isomorphisms. Conversely, if $T$ is an order isomorphism, then $T = U_{(Te)^{1/2}} U_{(Te)^{-1/2}} T$, and by the above corollary $U_{(Te)^{-1/2}} T$ is a Jordan isomorphism.

For the uniqueness, if $T = U_y J$, then $Te = U_y Je = U_y e = y^2$ which forces $y = (Te)^{1/2}$. This implies that $J = U_{(Te)^{-1/2}} T$, so $J$ is also unique.
\end{proof}

\section{Order isomorphisms on atomic JBW-algebras}

In this section we give a complete description of order isomorphisms between cones in atomic JBW-algebras. Furthermore, we characterize for which atomic JBW-algebras $M$ and $N$ all order isomorphisms $f\colon M_+\to N_+$ are linear. 

The class of atomic JBW-algebras provides a natural setting for Theorem~\ref{thrm,ilg}. Indeed, we proceed by describing the relation between the order theoretical notions stated in Theorem~\ref{thrm,ilg} with the atomic structure of the JBW-algebra. More precisely, in an atomic JBW-algebra the extreme vectors of the cone correspond to multiples of atoms, the disengaged atoms are precisely the central atoms, and the cone is the sup-hull of the positive linear span of the atoms.

\begin{lemma}\label{eng} In a JBW-algebra the atoms are precisely the normalized extreme vectors of the cone. \end{lemma}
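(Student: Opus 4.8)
The plan is to establish the two inclusions separately, using the functional calculus $\mathrm{JB}(x,e)\cong C(\sigma(x))$ together with the structure of the compression $U_pM$. Throughout, a "normalized" vector means one of norm $1$.

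First I would check that every atom is a normalized extreme vector. If $p$ is an atom, then $p$ is a nonzero projection, so $\sigma(p)\subseteq\{0,1\}$ with $1\in\sigma(p)$, whence $\|p\|=\max\sigma(p)=1$. For extremeness, take $y$ with $0\le y\le p$. The range projection $r(y)$ is the smallest projection $q$ with $y\le\lambda q$ for some $\lambda\ge 0$; since $y\le p=1\cdot p$, we get $r(y)\le p$, and minimality of $p$ forces $r(y)\in\{0,p\}$. If $r(y)=0$ then $y=0=0\cdot p$. If $r(y)=p$, then $y=U_{r(y)}y\in U_pM$, which is a JBW-algebra with unit $p$ whose projections are exactly those projections of $M$ dominated by $p$, i.e. only $0$ and $p$. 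Since the linear span of projections is norm dense in any JBW-algebra, $U_pM=\R p$, so $y=\lambda p$ with $\lambda\ge 0$. Hence $p$ is a normalized extreme vector.

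Conversely, let $x$ be a normalized extreme vector; I would first show $x$ is a projection. Under $\mathrm{JB}(x,e)\cong C(\sigma(x))$, with $x$ corresponding to the inclusion $\iota\colon\sigma(x)\hookrightarrow\R$, we have $\sigma(x)\subseteq[0,1]$ and $1\in\sigma(x)$ because $\|x\|=1$. For any $g\in C(\sigma(x))$ with $0\le g\le 1$, the element $g(x)\circ x$ corresponds to $\iota g$, and since $0\le tg(t)\le t$ on $\sigma(x)$ we get $0\le g(x)\circ x\le x$ in $M$. Extremeness of $x$ yields $g(x)\circ x=\lambda x$ for some $\lambda\ge 0$, i.e. $tg(t)=\lambda t$ for all $t\in\sigma(x)$, so $g\equiv\lambda$ on $\sigma(x)\setminus\{0\}$. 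If $\sigma(x)\setminus\{0\}$ contained two distinct points, Urysohn's lemma would give such a $g$ that is non-constant there; hence $\sigma(x)\setminus\{0\}$ is a single point, necessarily $\{1\}$ since $1\in\sigma(x)$. Thus $\sigma(x)\subseteq\{0,1\}$, so $x$ is a nonzero projection. Finally, if $q$ is a nonzero projection with $q\le x$, extremeness gives $q=\lambda x$ with $\lambda>0$, and comparing $q^2=q$ with $x^2=x$ forces $\lambda=1$, so $q=x$; hence $x$ is a minimal projection, i.e. an atom.

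The routine ingredients — that $\|p\|=\max\sigma(p)$, the order-embedding of $\mathrm{JB}(x,e)$ into $M$, the existence of range projections, and norm-density of the span of projections — are all standard. The only genuine idea is the Urysohn argument turning extremeness into $\sigma(x)\subseteq\{0,1\}$, together with the short computation $q=\lambda x\Rightarrow\lambda=1$ that upgrades "projection" to "minimal projection". I expect the main obstacle, such as it is, to be presenting the first direction crisply: one wants a clean one-line justification that $0\le y\le p$ puts $y$ in the compression $U_pM$, after which minimality of $p$ closes the argument immediately.
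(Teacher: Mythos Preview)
Your proof is correct and follows essentially the same strategy as the paper's: a Urysohn/functional-calculus argument to force $\sigma(x)\subseteq\{0,1\}$ for a normalized extreme vector, and the identification $U_pM=\R p$ for an atom $p$ to obtain the converse. The paper simply cites \cite[Lemma~3.29]{AS} and \cite[Proposition~2.32]{AS} for the atom-to-extreme direction where you unpack it via range projections, and your variant of the Urysohn step (using $g(x)\circ x$ rather than $x\pm f$) has the minor advantage of not needing the paper's preliminary observation that $0\in\sigma(x)$.
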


\begin{proof} 
Let $M$ be a JBW-algebra. If $x\in M_+$ is a normalized extremal vector, then $x$ lies in the boundary of $M_+$, so $0\in\sigma(x)$. Suppose that there are two distinct non-zero $s,t\in\sigma(x)$. Then an application of Urysohn's lemma yields a non-zero positive function $f\in C(\sigma(x))$ such that $x\pm f\in M_+$ by the continuous functional calculus. This contradicts the extremality of $x\in M_+$, so $\sigma(x)=\{0,1\}$ since $\|x\|=1$. Hence $x$ is a projection. Again, by the extremality of $x$, this projection must be minimal, or equivalently, it is an atom. 

Conversely, if $M$ is a JBW-algebra and $p\in M$ is an atom. Then by \cite[Lemma~3.29]{AS} we have $\mathbb{R}p=U_pM$ and  \cite[Proposition~2.32]{AS} in turn implies $\mathrm{face}(p)=\mathbb{R}_+p$ from which we conclude that $p$ is an extremal vector.
\end{proof}
Note that the first part of the proof of Lemma \ref{eng} is also valid in general JB-algebras, and hence any normalized extreme vector in the cone of a JB-algebra is a minimal projection. It follows from Lemma~\ref{l:extreme vectors bijection} that an order isomorphism between cones in JBW-algebras must map the rays corresponding to atoms bijectively onto each other.

\subsection{Engaged and disengaged parts of atomic JBW-algebras}
An atomic JBW-algebra $M$ can be decomposed as a direct sum $M=M_D\oplus M_E$, where $M_D$ and $M_E$ are atomic JBW-algebras that contain all disengaged and engaged atoms of $M$, respectively. In this case, the cone $M_+$ is the direct product $M_+=(M_E)_+\times (M_D)_+$, and $(M_E)_+$ equals the sup hull of the positive linear span of the engaged atoms of $M$, which is of interest to us in light of Theorem \ref{thrm,ilg}. To carry out the construction of this decomposition we characterize the disengaged atoms in an atomic JBW-algebra.
\begin{lemma}\label{dort} Let $M$ be an atomic JBW-algebra and $p\in M$ be an atom. The following are equivalent:
\begin{enumerate}[(i)]
\item $p$ is disengaged;
\item $p$ is orthogonal to all other atoms;
\item $p$ is central.\end{enumerate}
\end{lemma}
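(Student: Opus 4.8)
My plan is to prove $(iii)\Rightarrow(ii)$, $(ii)\Rightarrow(i)$, $(ii)\Rightarrow(iii)$, and $(i)\Rightarrow(ii)$, so that all three conditions become equivalent. Throughout I will use Lemma~\ref{eng} (the extreme vectors of $M_+$ are the non-negative multiples of atoms), the Peirce decomposition $M=M_1(p)\oplus M_{1/2}(p)\oplus M_0(p)$ with $M_1(p)=\mathbb{R}p$ by \cite[Lemma~3.29]{AS}, and the standard facts that $U_p$ is the Peirce-$1$ projection, that $U_pq=0$ iff $q\perp p$ for a projection $q$, that $M_0(p)$ consists exactly of the projections orthogonal to $p$, and that $p$ is central iff $M_{1/2}(p)=0$. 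For $(iii)\Rightarrow(ii)$: if $p$ is central then $M=\mathbb{R}p\oplus U_{p^\perp}M$ as an order direct sum of JBW-subalgebras by \cite[Proposition~2.41]{AS}; writing an atom $q$ as $q=\alpha p+q_0$ with $\alpha\ge 0$ and $q_0\in(U_{p^\perp}M)_+$, if $\alpha>0$ then $0\le\alpha p\le q$ and extremality of $q$ force $p$ to be a multiple of $q$, whence $p=q$; so if $q\ne p$ then $\alpha=0$ and $q=q_0\le p^\perp$.

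For $(ii)\Rightarrow(i)$: if $p=\sum_i\lambda_iq_i$ with atoms $q_i\ne p$ and $\lambda_i\in\mathbb{R}$, then by $(ii)$ each $q_i\perp p$, so applying the linear map $U_p$ gives $p=U_pp=\sum_i\lambda_iU_pq_i=0$, a contradiction; hence $p$ is disengaged. For $(ii)\Rightarrow(iii)$: the symmetry $s=2p-e$ satisfies $U_s=\mathrm{id}$ exactly when $M_{1/2}(p)=0$, i.e.\ exactly when $p$ is central. Now $U_s$ fixes $p$, and by $(ii)$ it fixes every other atom, since those lie in $M_0(p)$; as $U_s$ is $\sigma$-weakly continuous and the linear span of the atoms is $\sigma$-weakly dense in $M$ (by Lemma~\ref{l:net of projections} together with the spectral theorem), we conclude $U_s=\mathrm{id}$ and $p$ is central.

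The substantial implication is $(i)\Rightarrow(ii)$, which I will prove contrapositively: given an atom $q\ne p$ with $p\not\perp q$, I show $p$ is engaged. Put $r:=p\vee q$; since $p,q$ are distinct atoms, $p\wedge q=0$, and the parallelogram law for projections in a JBW-algebra yields $r-p\sim q$ and $r-q\sim p$. Because such a Murray--von Neumann equivalence is implemented by a Jordan automorphism, it preserves minimality of projections, so $r-p$ and $r-q$ are again atoms. As $q$ and $r-q$ are orthogonal atoms with $q+(r-q)=r$, we obtain
\[
p=r-(r-p)=q+(r-q)-(r-p),
\]
a linear combination of the atoms $q$, $r-q$, $r-p$, none of which equals $p$: indeed $q\ne p$ by hypothesis, $r-q=p$ would give $r=p+q$ and hence $p\perp q$, and $r-p=p$ would give the non-projection $r=2p$. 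Hence $p$ is engaged, which is what we needed.

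I expect the delicate point to be the assertion that $r-p$ and $r-q$ are atoms, equivalently that $p\vee q$ has rank two for any two distinct atoms $p,q$. I plan to obtain this from the comparison theory of projections (the parallelogram law / Kaplansky's formula) in the JBW setting, which requires locating the precise reference and checking that the von Neumann-algebra statements carry over. A more hands-on alternative is to use that $p\not\perp q$ forces a nonzero Peirce-$1/2$ component of $q$ relative to $p$, manufacture from it a ``$2\times2$ matrix unit'' $z$ with $z^2=p+q_0$ for some atom $q_0\le p^\perp$ inside $U_rM$, and read off the analogous identity $p=\tfrac12(p+q_0+z)+\tfrac12(p+q_0-z)-q_0$; but this essentially re-derives the same rank-two fact, so either way that is the crux of the argument.
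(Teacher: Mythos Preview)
Your argument is correct, but the route you take for the key implication $(i)\Rightarrow(ii)$ differs from the paper's. The paper proves the cycle $(i)\Rightarrow(ii)\Rightarrow(iii)\Rightarrow(i)$ and, for $(i)\Rightarrow(ii)$, invokes \cite[Lemma~3.53]{AS}: the sum $p+q$ of two atoms admits a spectral decomposition $p+q=\sum_{i=1}^n\lambda_i q_i$ with pairwise orthogonal atoms $q_i$. A short case analysis (does $p$ appear among the $q_i$? with coefficient $1$?) then either exhibits $p$ as a linear combination of atoms distinct from $p$, contradicting disengagedness, or forces $p\perp q$. Your approach instead appeals to comparison theory: with $r=p\vee q$ and $p\wedge q=0$, the parallelogram law gives $r-p\sim q$ and $r-q\sim p$, so $r-p$ and $r-q$ are atoms, and $p=q+(r-q)-(r-p)$ displays $p$ as engaged. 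Both arguments are really unpacking the same rank-two phenomenon for $U_rM$; the paper's choice has the advantage that \cite[Lemma~3.53]{AS} is a ready-made reference, whereas your version requires locating the JBW-analogue of Kaplansky's formula (it does hold, see e.g.\ \cite[Chapter~5]{OS}, and equivalence there is implemented by symmetries, hence by Jordan automorphisms, so minimality is preserved as you say). Your ``hands-on alternative'' via the Peirce-$1/2$ component is in fact close in spirit to what underlies \cite[Lemma~3.53]{AS}.

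For $(ii)\Rightarrow(iii)$ you argue via the symmetry $s=2p-e$ and the identity $U_s=\mathrm{id}$, while the paper checks directly that $p$ operator commutes with every projection (using Lemma~\ref{l:net of projections} and separate $\sigma$-weak continuity) and then quotes \cite[Lemma~4.2.5]{OS}. These are equivalent formulations of the same density argument. The remaining easy implications you split differently (you do $(iii)\Rightarrow(ii)\Rightarrow(i)$ rather than the paper's direct $(iii)\Rightarrow(i)$), but all of this is elementary either way.
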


\begin{proof} 
Let $p$ be a disengaged atom and let $q$ be an atom distinct from $p$. By \cite[Lemma~3.53]{AS} the sum $p+q$ can be written as an orthogonal sum of atoms $p+q=\sum_{i=1}^n \lambda_i q_i$. Suppose $p=q_j$ for some $j\in\{1,\ldots,n\}$. If $\lambda_j=1$ holds, then $q=\sum_{i\neq j}\lambda_i q_i$. Since $p$ equals $q_j$, it is orthogonal to all other $q_i$. Hence $p\circ q =0$ and thus $p$ and $q$ are orthogonal by \cite[Proposition~2.18]{AS}. If instead $\lambda_j\neq 1$ holds, then $p$ can be written as the non-trivial linear combination 
\[
p=\frac{1}{1-\lambda_j}\left(-q+\sum_{i\neq j}\lambda_i q_i\right)
\] 
of atoms different from $p$ which contradicts the assumption that $p$ is disengaged. The last case to consider is that $p$ does not equal any $q_i$, in which case $p=-q+\sum_{i=1}^n \lambda_i q_i$ contradicting that $p$ is disengaged yet again.
We conclude that $(i)$ implies $(ii)$.

Suppose $p$ is orthogonal to all other atoms.
In particular, $p$ operator commutes with all other atoms.
Let $q\in M$ be a projection. Let $(q_F)_{F\in \mathcal{F}}$ be a net directed by the finite subsets of a maximal set of pairwise orthogonal atoms dominated by $q$ as in Lemma~\ref{l:net of projections}. As multiplication is separately $\sigma$-weakly continuous and $p$ operator commutes with the finite sums $q_F$, the relation $p\circ(q\circ x)=q\circ(p\circ x)$ holds for all $x\in M$.
Hence, $p$ is a central projection by \cite[Lemma 4.2.5]{OS} showing that $(ii)$ implies $(iii)$.

Lastly, suppose that $p$ is central. Then $U_pM=\mathbb{R}p$ by \cite[Lemma~3.29]{AS} and we get $M=\mathbb{R}p\oplus U_{p^\perp} M$. It follows that $p$ is disengaged, showing that $(iii)$ implies $(i)$.
\end{proof}

\begin{lemma}\label{l:sup-hull}
The cone $M_+$ of an atomic JBW-algebra $M$ is the sup hull of the positive linear span of its atoms.
\end{lemma}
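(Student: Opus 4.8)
The plan is to establish the non-trivial inclusion $M_+\subseteq S$, where $S$ denotes the sup hull of the positive linear span of the atoms of $M$; the reverse inclusion is immediate, since any element of $S$ dominates some element of the positive linear span of atoms, which is contained in $M_+$, and hence is itself positive.

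First I would reduce the problem to projections by means of the spectral theorem \cite[Theorem~2.20]{AS}. For $x\in M_+$ this produces an increasing sequence $(s_n)_{n\in\mathbb{N}}$ of positive linear combinations of projections in $M$ (for instance the lower Riemann sums associated with the spectral resolution of $x$ over successively refined partitions of $[0,\|x\|]$) with $\|s_n-x\|\to 0$. Since $M$ is an order unit space whose cone is norm-closed, a norm-convergent increasing sequence has its limit as least upper bound, so $x=\sup_n s_n$. Hence it suffices to show that each $s_n$ belongs to $S$, and then that a supremum of members of $S$ again belongs to $S$.

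Next I would invoke Lemma~\ref{l:net of projections}: every projection $p\in M$ equals the supremum of the increasing net $(p_F)_{F\in\mathcal{F}}$ of finite sums of pairwise orthogonal atoms dominated by $p$. Writing a given $s_n$ as $\sum_{i=1}^{m}\lambda_i p_i$ with $\lambda_i\geq 0$ and $p_i=\sup_{F_i}p_{F_i}$, I would use the elementary order-theoretic facts that multiplication by a non-negative scalar preserves suprema, and that for finitely many increasing nets one has $\sup_{(\alpha_1,\dots,\alpha_m)}\bigl(a^{(1)}_{\alpha_1}+\dots+a^{(m)}_{\alpha_m}\bigr)=\sup_{\alpha_1}a^{(1)}_{\alpha_1}+\dots+\sup_{\alpha_m}a^{(m)}_{\alpha_m}$, proved by peeling off one summand at a time. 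This yields $s_n=\sup_{(F_1,\dots,F_m)}\sum_{i=1}^{m}\lambda_i p_{F_i}$, and each $\sum_{i=1}^{m}\lambda_i p_{F_i}$ is a finite positive linear combination of atoms, so $s_n\in S$. Finally, if $x=\sup_n s_n$ and each $s_n=\sup_{j\in I_n}y_j$ with the $y_j$ in the positive linear span of atoms, then $x=\sup_{j\in\bigsqcup_n I_n}y_j$, whence $x\in S$.

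I expect the only real difficulty to be bookkeeping: ensuring that all the suprema invoked genuinely exist in $M$ — which is exactly what Lemma~\ref{l:net of projections} and the order-unit-space argument provide — and that they survive the finite algebraic operations and can be amalgamated into a single indexed family. The substantive content is carried entirely by the spectral theorem and Lemma~\ref{l:net of projections}; no further JBW-algebraic structure is required.
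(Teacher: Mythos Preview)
Your proof is correct and follows essentially the same route as the paper's: both arguments reduce to projections via the spectral theorem, then invoke Lemma~\ref{l:net of projections} to express each projection as a supremum of finite sums of atoms, and finally amalgamate everything into a single supremum over positive linear combinations of atoms. The only cosmetic difference is that the paper packages the whole thing as one indexed family and verifies the supremum directly (using $\sigma$-weak closedness of $M_+$ at the step where you instead invoke the identity $\sup(a+b)=\sup a+\sup b$), whereas you proceed in two stages; the substance is identical.
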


\begin{proof}
Let $x\in M_+$. By the spectral theorem \cite[Theorem~2.20]{AS} there exists an increasing net $(x_i)_{i\in I}$ in $M_+$ consisting of positive linear combinations of orthogonal projections
\[
x_i:=\sum_{k=1}^{n_i}\lambda_{i,k} p_{i,k}
\]
for all $i\in I$ that converges in norm to its supremum $x$. For each $i\in I$ and $1\le k\le n_i$ there also exist increasing nets $(p_{F_{i,k}})_{F_{i,k}\in\mathcal{F}_{i,k}}$ of finite sums of orthogonal atoms that converge $\sigma$-weakly to their supremum $p_{i,k}$ as in Lemma~\ref{l:net of projections}. But now the set
\[
\left\{\sum_{k=1}^{n_i}\lambda_{i,k}p_{F_{i,k}}\colon i\in I,\ F_{i,k}\in\mathcal{F}_{i,k}\right\}
\]
is in the positive linear span of the atoms in $M$ and has supremum $x$. Indeed, this set has upper bound $x$ and if $y\in M_+$ is an upper bound for each of these positive linear combinations of atoms, then $x_i\le y$ for all $i\in I$, as $M_+$ is $\sigma$-weakly closed. Hence $x\le y$ and $x$ is therefore in the sup hull of the positive linear span of the atoms in $M$. 
%Let $S$ denote the sup-hull of the positive span of the atoms in $M$. By Lemma~\ref{l:net of projections} each projection in $M$ belongs to $S$. Therefore, all positive linear combinations of pairwise orthogonal projections lies in $S$. By the spectral theorem, \cite[Theorem~2.20]{AS}, any element of $M_+$ is the supremum of an increasing net of positive linear combinations of orthogonal projections, and hence must be an element of $S$ as well.
\end{proof}

Our next goal is to construct a central projection that dominates all disengaged atoms and its orthogonal complement dominates all engaged atoms. To that end, we define
\[
\mathcal{D}_M:=\left\{p\in M\colon \mbox{$p$ is a disengaged atom}\right\}
\]
and let $p_D$ be the supremum of $\mathcal{D}_M$ in the lattice of projections of $M$. The projection $p_D$ has the desired properties as shown in the following proposition.

\begin{proposition}\label{central} 
Let $M$ be an atomic JBW-algebra. Then $p_D$ is a central projection and any atom of $M$ is either dominated by $p_D$ or is orthogonal to $p_D$.
\end{proposition}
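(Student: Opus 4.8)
The plan rests on Lemma~\ref{dort}: every $p\in\mathcal{D}_M$ is a disengaged atom, so $(i)\Leftrightarrow(iii)$ shows it is central and $(i)\Rightarrow(ii)$ shows that distinct members of $\mathcal{D}_M$ are orthogonal; thus $\mathcal{D}_M$ is a set of pairwise orthogonal central atoms. I would then proceed in three steps: (a) exhibit $p_D$ as a $\sigma$-weak limit of central projections; (b) deduce that $p_D$ is central, using that the centre of $M$ is $\sigma$-weakly closed; and (c) read off the dichotomy for atoms from the centrality of $p_D$ via the decomposition $M=U_{p_D}M\oplus U_{p_D^\perp}M$. (If $\mathcal{D}_M=\emptyset$ then $p_D=0$ and the statement is trivial, so assume otherwise.) The delicate step is (a): $p_D$ is defined as a supremum in the abstract complete orthomodular lattice of projections, which for a general family need not be a supremum in $M$ for the ambient order, so the work is to match it with an honest $\sigma$-weak limit in $M$.

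For (a), index by the directed set $\mathcal{F}$ of finite subsets of $\mathcal{D}_M$ and put $p_F:=\sum_{p\in F}p$. Since $F$ consists of pairwise orthogonal projections, $p_F$ is a projection, and it is central, being a finite sum of central elements (the centre is a linear subspace). The net $(p_F)_{F\in\mathcal{F}}$ is increasing and bounded above by $e$, so, exactly as in the discussion preceding Lemma~\ref{l:net of projections}, it has a least upper bound $r$ in $M$; moreover $r$ is a projection and the net converges to it $\sigma$-weakly. It remains to identify $r$ with $p_D$: on one hand $r$ is an upper bound for $\mathcal{D}_M$ among projections, so $p_D\le r$; on the other hand $p_F\le p_D$ in the projection lattice for every $F$ (as $p_D$ dominates every member of $F$), so $p_D$ is an upper bound in $M$ for the net $(p_F)_{F\in\mathcal{F}}$, whence $r\le p_D$. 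Thus $p_D=r$ is the $\sigma$-weak limit of the central projections $p_F$. (Equivalently, de Morgan's law in the orthomodular lattice shows that $\mathcal{D}_M$ is itself a maximal set of pairwise disjoint atoms dominated by $p_D$ — any atom $s\le p_D$ orthogonal to every $p\in\mathcal{D}_M$ would satisfy $s\le\bigwedge_{p\in\mathcal{D}_M}p^\perp=p_D^\perp$, hence $s=0$ — so that Lemma~\ref{l:net of projections} applies directly.)

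For (b), recall that the centre $Z(M)=\{x\in M:x\text{ operator commutes with every }y\in M\}$ is a linear subspace, and it is $\sigma$-weakly closed. Indeed, if $x_i\to x$ $\sigma$-weakly with each $x_i$ central, then for fixed $y,z\in M$ the separate $\sigma$-weak continuity of the Jordan product gives $x_i\circ z\to x\circ z$, hence $y\circ(x_i\circ z)\to y\circ(x\circ z)$, and also $x_i\circ(y\circ z)\to x\circ(y\circ z)$; passing to the limit in the identity $x_i\circ(y\circ z)=y\circ(x_i\circ z)$ shows that $x$ operator commutes with $y$, and since $y$ was arbitrary, $x\in Z(M)$. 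Combined with (a), $p_D$ is a $\sigma$-weak limit of central elements, and is therefore central.

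For (c), the centrality of $p_D$ yields, via \cite[Proposition~2.41]{AS}, a decomposition $M=U_{p_D}M\oplus U_{p_D^\perp}M$ into JBW-algebras with units $p_D$ and $p_D^\perp$, so that $M_+=(U_{p_D}M)_+\times(U_{p_D^\perp}M)_+$. Let $s$ be an atom of $M$; by Lemma~\ref{eng} it is an extreme vector of $M_+$, and an extreme vector of a product cone $C_1\times C_2$ must lie in $C_1\times\{0\}$ or in $\{0\}\times C_2$. In the first case $s$ is a projection in $U_{p_D}M$, hence dominated by the unit $p_D$ of $U_{p_D}M$, i.e.\ $s\le p_D$; in the second case $s\in U_{p_D^\perp}M$ and so $s\le p_D^\perp$, i.e.\ $s$ is orthogonal to $p_D$. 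This establishes the proposition.
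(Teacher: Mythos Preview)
Your proof is correct, but it reverses the order of the paper's argument. The paper first establishes the dichotomy for atoms directly from Lemma~\ref{dort}: an engaged atom $p$ is orthogonal to every disengaged atom, hence $q\le p^\perp$ for all $q\in\mathcal{D}_M$, so $p_D\le p^\perp$; conversely any atom orthogonal to $p_D$ cannot be disengaged. Only then does the paper prove centrality, by observing that $p_D$ operator commutes with every atom (each being either below $p_D$ or orthogonal to it), hence with every projection via Lemma~\ref{l:net of projections} applied to an \emph{arbitrary} projection, and then invoking \cite[Lemma~4.2.5]{OS}. You instead prove centrality first, by realising $p_D$ as the $\sigma$-weak limit of the finite partial sums $p_F$ (which are central by Lemma~\ref{dort}) and appealing to $\sigma$-weak closedness of the centre; the dichotomy then drops out of the direct-sum decomposition and the extremality of atoms. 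Your route avoids the second use of Lemma~\ref{l:net of projections} for general projections and makes the centrality step essentially a one-liner, at the cost of a slightly longer detour (via extreme vectors of a product cone) to recover the dichotomy that the paper gets more directly from Lemma~\ref{dort}. Both arguments rely on Lemma~\ref{dort} in an essential way, just at different points.
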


\begin{proof}
Let $p$ be an engaged atom. Then $p$ is orthogonal to all disengaged atoms by Lemma~\ref{dort} and therefore $q\le p^\perp$ for all $q\in\mathcal{D}_M$. Thus $p_D\le p^\perp$, or equivalently $p$ is orthogonal to $p_D$. On the other hand, if $p$ is an atom orthogonal to $p_D$, then it cannot be disengaged. It follows that every atom of $M$ is either dominated by $p_D$ or is orthogonal to $p_D$.
 
Let $p\in M$ be a projection and $(p_F)_{F\in \mathcal{F}}$ be an increasing net that converges $\sigma$-weakly to $p$ consisting of finite sums induced by a maximal set of pairwise orthogonal atoms dominated by $p$ as in Lemma~\ref{l:net of projections}. Since any atom in $M$ is either dominated by $p_D$ or orthogonal to $p_D$, it follows from \cite[Proposition~2.18, Proposition~2.26]{AS} that $p_D$ operator commutes with all atoms in $M$. Hence $p_D$ operator commutes with $p_F$ for all $F\in\mathcal{F}$ and as multiplication is separately $\sigma$-weakly continuous, $p_D$ operator commutes with $p$. We conclude that $p_D$ operator commutes with all elements in $M$ by \cite[Lemma~4.2.5]{OS}, hence $p_D$ is a central projection.
\end{proof}

The central projection $p_D$ and its orthogonal complement $p_E:=p_D^\perp$ now decompose the atomic JBW-algebra $M$ as a direct sum of JBW-algebras $M=U_{p_D}M\oplus U_{p_E} M$. We refer to $M_D:=U_{p_D}M$ as the {\it disengaged} part of $M$ and $M_E:=U_{p_E} M$ as the {\it engaged} part of $M$. We proceed to show that the disengaged part of $M$ is a sum of copies of $\mathbb{R}$, and the cone in the engaged part of this decomposition is the sup hull of the positive span of the engaged atoms.

\begin{proposition}\label{jbw}  Let $M$ be an atomic JBW-algebra.  Then there exist JBW-algebras $M_D$ and $M_E$ such that $M=M_D\oplus M_E$ which satisfy the following properties:
\begin{enumerate}[(i)]
\item $M_D=\bigoplus_{p\in\mathcal{D}_M} \mathbb{R}p$;
\item $(M_E)_+$ equals the sup hull of the positive linear span of the engaged atoms.
\end{enumerate}\end{proposition}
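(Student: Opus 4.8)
The plan is to combine the central projection $p_D$ constructed in Proposition~\ref{central} with the algebraic direct sum decomposition $M = U_{p_D}M \oplus U_{p_E}M$, and then identify the two summands concretely using Lemma~\ref{dort}, Lemma~\ref{l:sup-hull}, and Proposition~\ref{central}. Set $M_D := U_{p_D}M$ and $M_E := U_{p_E}M$; these are JBW-subalgebras (with units $p_D$ and $p_E$ respectively) by \cite[Proposition~2.41]{AS}, so the skeleton $M = M_D \oplus M_E$ is already in hand, and it only remains to verify $(i)$ and $(ii)$.

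For $(i)$: first I would observe that $M_D$ is itself an atomic JBW-algebra whose atoms are exactly the atoms of $M$ dominated by $p_D$, and by Proposition~\ref{central} combined with Lemma~\ref{dort} these are precisely the disengaged atoms of $M$, i.e.\ the elements of $\mathcal{D}_M$. Each such atom $p$ is central in $M$ and hence central in $M_D$, so $U_p M_D = \mathbb{R}p$ by \cite[Lemma~3.29]{AS}. The disengaged atoms are pairwise orthogonal (Lemma~\ref{dort}$(ii)$), so applying Lemma~\ref{l:net of projections} to the projection $p_D$ in the atomic JBW-algebra $M_D$ with the maximal orthogonal family $\mathcal{D}_M$ (which is maximal since every atom of $M_D$ lies in $\mathcal{D}_M$), the net of finite partial sums $\sum_{p \in F} p$ converges $\sigma$-weakly to $p_D$. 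This exhibits $p_D$ as the sum of the central orthogonal atoms, and a standard argument — each central orthogonal summand $\mathbb{R}p$ splits off algebraically, and every element of $M_D$ is $\sigma$-weakly approximated by its finite "truncations" $U_{p_F}x$ which lie in $\bigoplus_{p \in F}\mathbb{R}p$ — yields the internal direct sum $M_D = \bigoplus_{p \in \mathcal{D}_M}\mathbb{R}p$. I would phrase this as: the central projections $\{p : p \in \mathcal{D}_M\}$ form an orthogonal family with supremum $p_D$, hence give the claimed decomposition of $M_D$.

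For $(ii)$: by Proposition~\ref{central} every atom of $M$ orthogonal to $p_D$ — equivalently, dominated by $p_E$ — is an engaged atom of $M$, and conversely every engaged atom is orthogonal to $p_D$, hence dominated by $p_E$; so the atoms of the atomic JBW-algebra $M_E$ are exactly the engaged atoms of $M$. Now Lemma~\ref{l:sup-hull} applied to $M_E$ states that $(M_E)_+$ is the sup hull of the positive linear span of the atoms of $M_E$, which is exactly the sup hull of the positive linear span of the engaged atoms of $M$. That is the assertion.

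The main obstacle I anticipate is the bookkeeping in $(i)$: turning the $\sigma$-weak convergence of $(p_F)$ to $p_D$ into a genuine internal algebraic direct sum decomposition $M_D = \bigoplus_{p \in \mathcal{D}_M}\mathbb{R}p$, being careful about what "$\bigoplus$" means for a possibly infinite family of central summands in a JBW-algebra (an $\ell^\infty$-type direct sum, not an algebraic one) and checking that no atoms of $M_D$ have been missed — this is where the maximality of $\mathcal{P}$ in Lemma~\ref{l:net of projections}, and the fact that an atom below $p_D - p_F$ would contradict maximality, do the real work. The identification of atoms of the summands with engaged/disengaged atoms of $M$ in $(ii)$ is comparatively routine given Proposition~\ref{central}.
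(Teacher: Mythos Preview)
Your proposal is correct and follows essentially the same route as the paper: decompose via the central projection $p_D$ from Proposition~\ref{central}, use Lemma~\ref{dort} and \cite[Lemma~3.29]{AS} to identify $M_D$ as the direct sum of one-dimensional central summands, and apply Lemma~\ref{l:sup-hull} to $M_E$ after noting (via Proposition~\ref{central}) that its atoms are exactly the engaged atoms of $M$. The paper's proof is briefer in part $(i)$ --- it simply writes $M_D=\bigoplus_{p\in\mathcal{D}_M} U_pM=\bigoplus_{p\in\mathcal{D}_M} \mathbb{R}p$ without the $\sigma$-weak approximation bookkeeping you outline --- but your extra care about what the infinite direct sum means is not misplaced, just more explicit than the paper chooses to be.
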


\begin{proof}
Decompose $M$ into its disengaged and engaged part $M=M_D\oplus M_E$. By Lemma~\ref{dort} all disengaged atoms are central projections in $M_D$, so we can write $$M_D=\bigoplus_{p\in\mathcal{D}_M} U_pM=\bigoplus_{p\in\mathcal{D}_M} \mathbb{R}p,$$
as $U_p M$ is one-dimensional by \cite[Lemma~3.29]{AS}. Since $M_E$ is an atomic JBW-algebra with unit $p_E$ by \cite[Proposition~2.9]{AS}, the second statement follows from Lemma~\ref{l:sup-hull} as the atoms in $M_E$ are precisely the engaged atoms of $M$ by Proposition \ref{central}.
\end{proof}

\subsection{Describing the order isomorphisms}
Using Proposition~\ref{jbw} we can now characterize the atomic JBW-algebras $M$ and $N$ for which every order isomorphism $f\colon M_+\to N_+$ is linear. 

\begin{theorem}\label{thrm,ajbw} 
Let $M$ and $N$ be atomic JBW-algebras with order isomorphic cones. Then any order isomorphism $f\colon M_+ \rightarrow N_+$ is linear on $(M_E)_+$, and $f[(M_E)_+]=(N_E)_+$, where $M_E$ and $N_E$ are the engaged parts of $M$ and $N$, respectively. In particular, any order isomorphism $f\colon M_+\to N_+$ is linear if and only if $M$ does not contain any central atoms.
\end{theorem}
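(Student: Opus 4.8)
The plan is to combine Theorem~\ref{thrm,ilg} with the structural decomposition of Proposition~\ref{jbw}, then handle the disengaged (central-atom) part by hand. First I would establish that $f$ respects the decomposition $M_+=(M_E)_+\times(M_D)_+$ and $N_+=(N_E)_+\times(N_D)_+$. By Lemma~\ref{eng} and Lemma~\ref{l:extreme vectors bijection}, $f$ carries normalized atoms of $M$ bijectively to normalized atoms of $N$, and by Lemma~\ref{dort} it carries central atoms to central atoms (centrality is detected by being orthogonal to all other atoms, which in turn is an order-theoretic property via the face structure, or one may argue via the ``engaged'' characterization, which is purely order-theoretic by \cite[Proposition~1]{NS2}). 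Hence $f$ maps disengaged atoms of $M$ onto disengaged atoms of $N$, so it induces a bijection $\mathcal D_M\to\mathcal D_N$; in particular $M$ has a central atom iff $N$ does. The engaged atoms are likewise mapped to engaged atoms.

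Next, by Proposition~\ref{jbw}(ii), $(M_E)_+$ is the sup hull of the positive linear span of the engaged atoms of $M$, which is contained in the inf-sup hull of that positive linear span; Theorem~\ref{thrm,ilg} then gives that $f$ is linear on $(M_E)_+$. To see that $f[(M_E)_+]=(N_E)_+$, I would argue that the linearity of $f$ on $(M_E)_+$ means it agrees there with a linear map $T$, and a linear order isomorphism sending engaged extreme rays to engaged extreme rays must send the sup hull of their positive span onto the corresponding sup hull on the $N$ side; concretely, for $x\in(M_E)_+$ with $x=\sup_i x_i$ where each $x_i$ is a positive combination of engaged atoms, $f(x)=\sup_i f(x_i)$ (as $f$ is an order isomorphism it preserves suprema that exist) and each $f(x_i)$ lies in the positive span of engaged atoms of $N$, so $f(x)\in(N_E)_+$; applying the same to $f^{-1}$ gives equality. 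This also forces $f[(M_D)_+]=(N_D)_+$ by the product structure.

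For the final ``in particular'' statement: if $M$ contains no central atoms then $\mathcal D_M=\emptyset$, so $M_D=\{0\}$ and $M=M_E$, whence $f$ is linear everywhere on $M_+$ by the above. Conversely, suppose $M$ contains a central atom $p$, so $\mathbb Rp$ is an algebraic direct summand and $(M_D)_+$ contains a copy of $\R_+$ as a direct factor of $M_+$; then $N$ also contains a central atom, and on this one-dimensional factor $\R_+$ one can build a nonlinear order isomorphism $\R_+\to\R_+$, e.g.\ $t\mapsto t^2$ (or any increasing bijection of $[0,\infty)$ fixing $0$ that is not linear), and extend it by the identity (or by $f$) on the complementary factors to obtain a nonlinear order isomorphism $M_+\to N_+$. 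Here I should be slightly careful that the target really has a matching one-dimensional central summand so that the map lands in $N_+$; this follows since $f$ restricts to a bijection $\mathcal D_M\to\mathcal D_N$ and each disengaged summand on either side is exactly $\R p$, so $M_D$ and $N_D$ are isomorphic as ordered spaces, and I can first apply the induced linear isomorphism and then perturb one coordinate.

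\textbf{Main obstacle.} The genuinely delicate step is verifying that $f$ preserves the engaged/disengaged dichotomy on atoms and that it commutes with the relevant suprema, i.e.\ that $f[(M_E)_+]=(N_E)_+$ rather than merely that $f$ is linear on $(M_E)_+$. Linearity on a subset does not immediately tell us where that subset is mapped; the argument needs that the sup hull description in Proposition~\ref{jbw}(ii) is itself order-theoretically intrinsic (so that $f$ and $f^{-1}$ both carry ``sup hull of positive span of engaged atoms'' to the same object), which ultimately rests on the order-theoretic characterizations of extreme rays and of engagedness together with preservation of existing suprema by an order isomorphism.
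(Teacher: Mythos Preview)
Your proposal is essentially correct and follows the same overall strategy as the paper: apply Theorem~\ref{thrm,ilg} together with Proposition~\ref{jbw}(ii) to get linearity on $(M_E)_+$, then use the sup-hull description to obtain $f[(M_E)_+]=(N_E)_+$, and finally handle the ``in particular'' by a coordinate-wise nonlinear perturbation on the disengaged part. The converse construction with $t\mapsto t^2$ and the forward direction when $\mathcal D_M=\emptyset$ match the paper's argument exactly.

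The one point where you diverge from the paper, and where your justification is slightly loose, is the order in which you establish that engaged atoms go to engaged atoms. You try to prove this \emph{first}, appealing to ``engagedness is purely order-theoretic by \cite[Proposition~1]{NS2}''; but that proposition only gives an order-theoretic characterization of \emph{extreme rays}, not of engagedness, so the citation does not do the work you want. The paper avoids this issue by reversing the order: it first obtains linearity of $f$ on $(M_E)_+$ from Theorem~\ref{thrm,ilg}, and only \emph{then} deduces that engaged atoms go to engaged atoms. The point is that an engaged atom $p$ can be written as a linear combination of other \emph{engaged} atoms (any disengaged atoms appearing in such a combination are orthogonal to $p$ by Lemma~\ref{dort} and drop out after applying $U_{p_E}$); splitting the combination into its positive and negative parts gives an identity in $(M_E)_+$ to which the already-established linearity applies, yielding that $f(p)$ is a nontrivial linear combination of other extreme vectors of $N_+$, hence engaged. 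Once you have that, your sup-hull argument for $f[(M_E)_+]\subseteq(N_E)_+$ (and the symmetric argument for $f^{-1}$) goes through exactly as you wrote it, and coincides with the paper's.
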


\begin{proof} 
Proposition~\ref{jbw}$(ii)$ in conjunction with Theorem~\ref{thrm,ilg} yields that $f$ is linear on $(M_E)_+$. Consequently, the rays corresponding to the engaged atoms of $M$ must be mapped bijectively to the rays corresponding to the engaged atoms of $N$. In particular, the order isomorphism $f$ maps $(M_E)_+$ into $(N_E)_+$ since these cones are the sup hull of the positive linear span of the engaged atoms by Proposition~\ref{jbw}. Applying a similar argument to $f^{-1}$ yields $f[(M_E)_+]=(N_E)_+$. 

For the second part of the statement, suppose that $M$ does not contain any central atoms. Then $M=M_E$ by Proposition~\ref{jbw} and so $M_+=(M_E)_+$. In particular, all order isomorphisms $f\colon M_+\to N_+$ must be linear. Conversely, suppose that $M$ does contain central atoms and all order isomorphism $f\colon M_+\to N_+$ are linear. Let $f\colon M_+\to N_+$ be a linear order isomorphism. Using the notation of Proposition~\ref{jbw}$(i)$, define the map $g$ on $(M_D)_+$ by 
\[
g((x_pp)_{p\in\mathcal{D}_M}):=(x_p^2p)_{p\in\mathcal{D}_M}.
\]
Note that $g$ is a non-linear order isomorphism and therefore, by Proposition~\ref{jbw} the map $g\oplus\mathrm{Id}$ defined on $(M_D)_+\times (M_E)_+=M_+$ is a non-linear order isomorphism. It follows that $f\circ (g\oplus\mathrm{Id})\colon M_+\to N_+$ is not linear either, which yields the required contradiction.
\end{proof}

A statement similar to Theorem~\ref{thrm,ajbw} is also valid when order isomorphisms $f\colon\Omega\to\Theta$ between upper sets $\Omega\subseteq M$ and $\Theta\subseteq N$ are considered instead of cones. Note that upper sets need not be convex. For example the union of two translations of the cone is an upper set. In this case we conclude that the order isomorphism is affine instead of linear as $f(0)=0$ is no longer automatic. By an affine map $T\colon M\to N$ we mean a translation of a linear map, that is, there is a linear map $S\colon M\to N$ and an $x\in N$ such that $T=S+x$. Consequently, we say that an order isomorphism $f\colon \Omega\to \Theta$ between upper sets is affine if there exists an affine map $T\colon M\to N$ such that $T$ restricted to $\Omega$ coincides with $f$.

\begin{theorem}\label{main_cone} Let $M$ and $N$ be atomic JBW-algebras such that $M_+=(M_E)_+$, then every order isomorphism $f\colon\Omega\rightarrow \Theta$ between upper sets $\Omega\subseteq M$ and $\Theta\subseteq N$ is affine.
\end{theorem}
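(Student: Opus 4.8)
The plan is to reduce the statement about upper sets to the already-established Theorem~\ref{thrm,ajbw} about cones by a translation trick, exploiting the fact that $M_+ = (M_E)_+$ means $M$ has no central atoms, so $M$ is ``all engaged''. First I would fix $x_0 \in \Omega$ and consider the translated upper set $\Omega - x_0$; since $\Omega$ is an upper set, $\Omega - x_0$ contains $M_+$ (because $x \geq 0$ implies $x_0 + x \in \Omega$). Similarly, picking $y_0 := f(x_0)$, the set $\Theta - y_0$ contains $N_+$. So I would like to analyze the conjugated map $g \colon \Omega - x_0 \to \Theta - y_0$, $g(z) := f(z + x_0) - y_0$, which is again an order isomorphism between upper sets, now both containing the respective cones, and with $g(0) = 0$. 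If I can show $g$ restricted to $M_+$ is linear and then that this forces $g$ to be affine (hence linear, since $g(0)=0$) on all of $\Omega - x_0$, translating back gives that $f$ is affine.

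The first substantive step is to show that $g|_{M_+}$ is linear. Here I would want to invoke Theorem~\ref{thrm,ilg} (or Theorem~\ref{thrm,ajbw}) but there is a subtlety: $g$ restricted to $M_+$ need not be a self-map of $M_+$ — its image is $g(M_+) \subseteq \Theta - y_0$, which need not be a cone. The key observation to extract is that $g$ must map extreme rays of $M_+$ to extreme rays of $N_+$: an extreme ray $R_p$ of $M_+$, being a one-dimensional face, is characterized order-theoretically, and since $0 \leq z \leq \lambda p$ in $M_+$ translates to an order interval in $\Omega - x_0$, the order isomorphism property of $g$ together with $g(0)=0$ should force $g$ to send such rays to rays in $N_+$. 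Then, since $M = M_E$, every atom of $M$ is engaged, and $M_+$ is the sup hull of the positive linear span of the engaged atoms by Proposition~\ref{jbw}$(ii)$; the argument of Theorem~\ref{thrm,ilg}/\ref{thrm,ajbw} — which only needs $g$ to be an order isomorphism from $M_+$ onto its image inside the Archimedean space $N$ and to behave well on engaged extreme rays — then yields that $g$ is linear on the inf-sup hull of the positive linear span of the engaged extreme rays, which is all of $M_+$. Let $T \colon M \to N$ be the linear extension of $g|_{M_+}$.

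The second step is to bootstrap from linearity on $M_+$ to affinity on all of $\Omega - x_0$. Take any $z \in \Omega - x_0$. For any $w \in M_+$ we have $z + w \in \Omega - x_0$ and $z \leq z + w$, so by order preservation $g(z) \leq g(z+w)$; I would compare $g$ on the ``shifted cone'' $z + M_+$ with $g$ on $M_+$. Concretely, the map $w \mapsto g(z + w)$ is an order isomorphism from $M_+$ onto $g(z) + (\text{image})$, and I expect one can again run the extreme-ray analysis: extreme rays of the cone $z + M_+$ emanating from the vertex $z$ (i.e., sets $z + R_p$) must go to extreme rays of $g(z) + N_+$ emanating from $g(z)$, and the same inf-sup hull argument shows $w \mapsto g(z+w) - g(z)$ is linear on $M_+$, hence equals some linear $T_z$. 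Matching $T_z$ with $T$ on overlaps (using that $M_+ - M_+ = M$ since the cone is generating in an order unit space, and that $(z + M_+) \cap M_+$ is large enough to pin down a linear map) forces $T_z = T$, giving $g(z+w) = g(z) + Tw$ for all $w \in M_+$. Since every point of $\Omega - x_0$ differs from $0$ by an element expressible appropriately and the cone is generating, this identity propagates to show $g(z) = Tz$ for all $z$, i.e., $g = T$ on $\Omega - x_0$. Translating back, $f(u) = T(u - x_0) + y_0$ for $u \in \Omega$, which is affine.

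The main obstacle I anticipate is the bookkeeping in the second step: justifying rigorously that the local linear maps $T_z$ at different vertices $z$ all coincide, and that they coincide with $T$, without circularity — in particular handling the fact that $\Omega$ need not be convex, so one cannot simply interpolate between points of $\Omega$. The resolution should be that all the comparisons take place by adding elements of the \emph{fixed} cone $M_+$ (which is convex and generating), so any two vertices $z, z'$ with $z \leq z'$ can be compared directly, and a general pair can be compared through $z + (z' - z)_+$ type intermediaries inside $\Omega - x_0$; the Archimedean order-unit structure then rigidifies everything. A secondary technical point is re-checking that Theorem~\ref{thrm,ilg}'s proof genuinely only uses that the map is an order isomorphism onto its image in an Archimedean space (and not that the image is a full cone); if it does not, one would instead first extend $g|_{M_+}$ to a self-order-isomorphism of $M_+$ onto $N_+$ using that $g$ maps atoms to atoms and then quote Theorem~\ref{thrm,ajbw} verbatim.
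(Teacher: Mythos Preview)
Your overall architecture matches the paper's proof: translate to a fixed vertex, apply Theorem~\ref{thrm,ajbw} on the translated cone, then glue the resulting affine maps at different vertices. But you have manufactured a difficulty in step one that isn't there, and your step two is vaguer than it needs to be.

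For step one: the ``subtlety'' that $g(M_+)$ need not equal $N_+$ is illusory. Because $\Omega$ is an upper set containing $x_0$, one has $x_0 + M_+ = \{y\in\Omega : y\ge x_0\}$, a purely order-theoretic description. Since $f$ is an order isomorphism and $\Theta$ is also an upper set, $f$ sends this set bijectively onto $\{z\in\Theta : z\ge f(x_0)\} = f(x_0)+N_+$. Hence your $g$ restricts to a genuine order isomorphism $M_+\to N_+$, and Theorem~\ref{thrm,ajbw} applies verbatim with no need to revisit the internals of Theorem~\ref{thrm,ilg} or to worry about images that fail to be cones.

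For step two, the paper's gluing is cleaner than your matching-of-local-linear-maps-on-overlaps sketch. Having obtained an affine map $g\colon M\to N$ agreeing with $f$ on $x_0+M_+$, take any $y\in\Omega$ and the analogous affine $h$ agreeing with $f$ on $y+M_+$. Choose $z\ge x_0,y$ (order unit), so $z\in\Omega$ and $g=h$ on $z+M_+$. Now write $y$ as the affine combination
\[
y = -(z+(z-y)) + 2z
\]
of the two points $z+(z-y),\,z\in z+M_+$; affinity of $g$ and $h$ then gives $g(y)=h(y)=f(y)$ in one line. This replaces your paragraph about comparing $T_z$'s through intermediaries and avoids any bookkeeping about non-convexity of $\Omega$.
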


\begin{proof} 
Suppose that $M_+=(M_E)_+$ and let $f \colon \Omega \to \Theta$ be an order isomorphism. For any $x\in\Omega$ we have that $f$ restricts to an order isomorphism from $x+M_+$ onto $f(x)+N_+$. Define the map $\hat{f}\colon M_+\to N_+$ by 
\[
\hat{f}(y):=f(x+y)-f(x).
\]
It follows that $\hat{f}$ is an order isomorphism as it is the composition of two translations and the restriction of $f$. By Theorem~\ref{thrm,ajbw}, $\hat{f}$ must be linear and therefore the restriction of $f$ to $x+M_+$ must be affine. Hence there exists an affine map $g\colon M\rightarrow N$ that coincides with $f$ on $x+M_+$. We proceed to show that $f$ coincides with $g$ on all of $\Omega$. To that end, let $y\in\Omega$. Analogously, there is an affine map $h\colon M\to N$ that coincides with $f$ on $y+M_+$. Since $M$ is an order unit space, there exists a $z\in M$ such that $x,y\le z$ and we have $z\in\Omega$ as $\Omega$ is an upper set. It follows that $z+M_+\subseteq (x+M_+)\cap(y+M_+)$, so $g$ and $h$ coincide on $z+M_+$. Note that we can write $y$ as the affine combination
\[
y=z-(z-y)=-(z+(z-y))+2z
\]
of the elements $z+(z-y),z\in z+M_+$. We find that

\begin{align*}
f(y) & = h(y) = h(-(z+(z-y))+2z) = -h(z+(z-y))+2h(z) = -g(z+(z-y))+2g(z)\\ &= g(-(z+(z-y))+2z) = g(y),
\end{align*}
and we conclude that $f$ coincides with $g$ on $\Omega$.
\end{proof}

Next, we will completely describe the order isomorphisms $f\colon M_+\to N_+$ between the cones of atomic JBW-algebras in the following theorem. Using the notation of Proposition~\ref{jbw}, we denote by $M_E$ and $N_E$ the engaged parts of $M$ and $N$ respectively, and similarly, $M_D$ and $N_D$ are the corresponding disengaged parts. Furthermore, by $\mathcal{D}_M$ and $\mathcal{D}_N$ we denote the collection of the disengaged atoms in $M$ and $N$ respectively.

\begin{theorem}\label{t:order isoms on atomic JBW}
Let $M$ and $N$ be atomic JBW-algebras and let $f\colon M_+\to N_+$ be an order isomorphism. Then there exist $y\in N_+^\circ$, order isomorphisms $f_p\colon\mathbb{R}_+\to \mathbb{R}_+$ for all $p\in\mathcal{D}_M$, a bijection $\sigma\colon\mathcal{D}_M\to\mathcal{D}_N$, and a Jordan isomorphism $J\colon M_E\to N_E$, such that for all $x=x_D + x_E\in M_+$ with $x_D=(x_pp)_{p\in\mathcal{D}_M}$ we have 
\[
f(x) = (f_p(x_p)\sigma(p))_{p\in\mathcal{D}_M} + U_yJx_E.
\]
\end{theorem}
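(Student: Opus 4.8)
The plan is to decompose the problem along the algebraic direct sum $M = M_D \oplus M_E$ and $N = N_D \oplus N_E$ provided by Proposition~\ref{jbw}, using Lemma~\ref{l:extreme vectors bijection} to track where the extreme rays go. By Theorem~\ref{thrm,ajbw} we already know that $f$ is linear on $(M_E)_+$ and that $f[(M_E)_+] = (N_E)_+$; writing $g := f|_{(M_E)_+}$ and invoking Corollary~\ref{orderisoms} together with Proposition~\ref{p:order_isomorphism}, there is a unique $y \in N_+^\circ$ and a Jordan isomorphism $J \colon M_E \to N_E$ with $g = U_y J$, so the engaged part of the formula is immediate. It remains to understand $f$ on the disengaged part, and to show that $f$ genuinely splits as a product of its action on the two summands.

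First I would analyze the disengaged atoms. By Lemma~\ref{eng} and Lemma~\ref{l:extreme vectors bijection}, $f$ permutes the extreme rays; by Lemma~\ref{dort} the disengaged atoms of $M$ are exactly the central atoms, equivalently the rays $R_p$ with $p \in \mathcal{D}_M$, and likewise for $N$. Since the engaged rays are sent to engaged rays (Theorem~\ref{thrm,ajbw}), $f$ must carry $\{R_p : p \in \mathcal{D}_M\}$ bijectively onto $\{R_q : q \in \mathcal{D}_N\}$; this defines the bijection $\sigma \colon \mathcal{D}_M \to \mathcal{D}_N$ and, restricting $f$ to each ray $R_p \cong \mathbb{R}_+$, an order isomorphism $f_p \colon \mathbb{R}_+ \to \mathbb{R}_+$ with $f(x_p p) = f_p(x_p)\sigma(p)$. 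The crux is then to prove that $f$ is ``diagonal'' with respect to the product decomposition $M_+ = (M_D)_+ \times (M_E)_+$ of Proposition~\ref{jbw}, i.e.\ that $f(x_D + x_E) = f(x_D) + f(x_E)$ with $f(x_D) \in (N_D)_+$ and $f(x_E) \in (N_E)_+$. For this I would argue via faces: $(M_E)_+$ and each $R_p$ are faces of $M_+$ (direct summands give faces, and extreme rays are faces), order isomorphisms preserve faces, and I would identify $(M_D)_+$ as, say, the smallest face containing all disengaged extreme rays — or dually characterize $(M_E)_+$ order-theoretically as generated by the engaged rays — so that $f$ preserves the splitting. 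Concretely, for $x = x_D + x_E$ one shows $x_E = \sup\{\text{engaged-ray parts}\}$ and $x_D = \sup\{x_p p\}$ in a way that $f$ respects, using that $f$ preserves suprema and that the engaged and disengaged parts are ``orthogonal'' faces whose spans intersect trivially.

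The main obstacle I anticipate is precisely this last splitting step: showing that an order isomorphism cannot ``mix'' the disengaged and engaged parts, and that its restriction to $(M_D)_+$ decomposes coordinatewise over $\mathcal{D}_M$ rather than acting in some entangled fashion across different central atoms. The coordinatewise behavior on $M_D$ should follow because $M_D = \bigoplus_p \mathbb{R}p$ makes $(M_D)_+$ the product of the rays $R_p$, each of which is an extreme ray hence a face preserved by $f$; an order isomorphism of a product of cones that preserves each factor-as-a-face, together with the fact that suprema are computed coordinatewise, forces it to act as a product of the $f_p$. The separation of $M_D$ from $M_E$ is cleaner: since $p_D$ is central, $(M_D)_+$ is a face of $M_+$ and so is $(M_E)_+$, and an element $x \in M_+$ lies in $(M_E)_+$ iff it is a supremum of positive combinations of engaged atoms (Proposition~\ref{jbw}(ii)), a purely order-theoretic condition preserved by $f$; dually for $(M_D)_+$. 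Once the splitting is in hand, assembling $f(x) = (f_p(x_p)\sigma(p))_{p \in \mathcal{D}_M} + U_y J x_E$ and checking $y = (f(p_E))^{1/2} \in N_+^\circ$ via Proposition~\ref{p:order_isomorphism} is routine.
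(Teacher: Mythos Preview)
Your proposal is correct and follows essentially the same route as the paper: decompose via Proposition~\ref{jbw}, use Theorem~\ref{thrm,ajbw} to get $f[(M_E)_+]=(N_E)_+$ and linearity there, apply Proposition~\ref{p:order_isomorphism} to extract $U_yJ$, and handle the disengaged rays via Lemma~\ref{l:extreme vectors bijection}. The splitting step you flag as the main obstacle is dispatched in the paper in one line using only that order isomorphisms preserve existing suprema: since $x_D + x_E = \sup\{x_D, x_E\}$ in the direct sum, one has $f(x_D + x_E) = \sup\{f(x_D), f(x_E)\} = f(x_D) + f(x_E)$ (the last equality because $f(x_D)\in(N_D)_+$ and $f(x_E)\in(N_E)_+$), and likewise $x_D = \sup\{x_p p : p\in\mathcal{D}_M\}$ yields the coordinatewise formula on $(M_D)_+$ --- so your face-based detour is unnecessary.
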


\begin{proof}
Let $f\colon M_+\to N_+$ be an order isomorphism. By Proposition~\ref{jbw} we can decompose $M_+$ and $N_+$ as $M_+=(M_D)_+\times (M_E)_+$ and $N_+=(N_D)_+\times (N_E)_+$. By Theorem~\ref{thrm,ajbw} we have $f[(M_E)_+]=(N_E)_+$, and therefore also $f[(M_D)_+]=(N_D)_+$. Furthermore, the rays corresponding to the disengaged atoms of $M$ are mapped bijectively to the rays corresponding to the disengaged atoms of $N$. In particular, there exists a bijection $\sigma\colon \mathcal{D}_M\to\mathcal{D}_N$ and for each $p\in\mathcal{D}_M$ there is an order isomorphism $f_p\colon\mathbb{R}_+\to \mathbb{R}_+$ such that $f(\lambda p)=f_p(\lambda)\sigma(p)$. 

For $x\in M_+$ we have $x=x_D + x_E=\sup\{x_D,x_E\}$ and we find that
\[
f(x_D + x_E)=f(\sup\{x_D,x_E\})=\sup\{f(x_D),f(x_E)\}= f(x_D)+f(x_E),
\]
where the last equality is due to $f(x_D)\in (N_D)_+$ and $f(x_E)\in (N_E)_+$. This shows that $f$ decomposes as the sum of order isomorphisms $f_D \colon (M_D)_+ \to (N_D)_+$ and $f_E \colon (M_E)_+ \to (N_E)_+$, by defining $f_D(x_D)=f((x_D,0))$ and $f_E(x_E)=f((0,x_E))$. Every $x_D\in (M_D)_+$ is of the form $x_D=(x_pp)_{p\in\mathcal{D}_M}$ and satisfies $x_D=\sup\{x_pp\colon p\in\mathcal{D}_M\}$, hence
\begin{align*}
f_D(x_D)&=f((x_pp)_{p\in\mathcal{D}_M})=f(\sup\{x_pp\colon p\in\mathcal{D}_M\})=\sup\{f(x_pp)\colon p\in\mathcal{D}_M\}\\&=\sup\{f_p(x_p)\sigma(p)\colon p\in\mathcal{D}_M\}\\&=(f_p(x_p)\sigma(p))_{p\in\mathcal{D}_M}.
\end{align*}
Moreover, since $f_E$ is a linear order isomorphism, it follows that  $f(x_E)=U_y Jx_E$ for an element $y\in N_+^\circ$ and a Jordan isomorphism $J\colon M_E\to N_E$ by Proposition~\ref{p:order_isomorphism}. 
\end{proof}

An interesting and immediate consequence of Theorem~\ref{t:order isoms on atomic JBW} is the following corollary.

\begin{corollary}
Let $M$ and $N$ be atomic JBW-algebras. Then the cones $M_+$ and $N_+$ are order isomorphic if and only if $M$ and $N$ are Jordan isomorphic.
\end{corollary}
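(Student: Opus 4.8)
The plan is to prove both implications by leveraging Theorem~\ref{t:order isoms on atomic JBW}, which already gives a complete structural description of order isomorphisms between cones of atomic JBW-algebras. The nontrivial direction is that order isomorphic cones force $M$ and $N$ to be Jordan isomorphic; the converse is essentially immediate.

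First I would handle the easy direction: suppose $J_0\colon M\to N$ is a Jordan isomorphism. A Jordan isomorphism is a unital linear bijection preserving the spectrum, hence by Corollary~\ref{orderisoms} it is an order isomorphism, and in particular it restricts to an order isomorphism $M_+\to N_+$. So $M_+$ and $N_+$ are order isomorphic.

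For the forward direction, let $f\colon M_+\to N_+$ be an order isomorphism and apply Theorem~\ref{t:order isoms on atomic JBW} to obtain the decomposition: a bijection $\sigma\colon\mathcal{D}_M\to\mathcal{D}_N$ between the disengaged atoms, a Jordan isomorphism $J\colon M_E\to N_E$ between the engaged parts, order isomorphisms $f_p\colon\mathbb{R}_+\to\mathbb{R}_+$, and $y\in N_+^\circ$. Now the key observation is that the Jordan isomorphism $J$ on the engaged parts can be combined with a Jordan isomorphism on the disengaged parts built from $\sigma$. Indeed, by Proposition~\ref{jbw}$(i)$ we have $M_D=\bigoplus_{p\in\mathcal{D}_M}\mathbb{R}p$ and $N_D=\bigoplus_{q\in\mathcal{D}_N}\mathbb{R}q$, so the bijection $\sigma$ induces a linear isomorphism $J_D\colon M_D\to N_D$ sending $p\mapsto\sigma(p)$ for each disengaged atom $p$. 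Since each $\mathbb{R}p$ is a one-dimensional Jordan subalgebra (with $p$ acting as the local unit and $p\circ p=p$), and likewise for $\sigma(p)$, the map $J_D$ is a Jordan isomorphism. Then $J_D\oplus J\colon M=M_D\oplus M_E\to N_D\oplus N_E=N$ is a Jordan isomorphism, as Jordan isomorphisms between direct sums are exactly the direct sums of Jordan isomorphisms between matched summands. This exhibits $M$ and $N$ as Jordan isomorphic.

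The main (and only mild) obstacle is verifying that $\sigma$ genuinely matches up summands in a way that respects the algebraic structure — but this is automatic since each disengaged summand is just $\mathbb{R}$ with its unique Jordan structure, so any bijection of index sets yields a Jordan isomorphism between the disengaged parts. One should also note that this argument does not require the order isomorphisms $f_p$ or the element $y$ at all: their presence in Theorem~\ref{t:order isoms on atomic JBW} accounts for the fact that $f$ itself need not be linear, but the existence of \emph{some} Jordan isomorphism between $M$ and $N$ follows purely from the matching data $(\sigma, J)$.
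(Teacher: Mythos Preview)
Your proof is correct and follows essentially the same approach as the paper: apply Theorem~\ref{t:order isoms on atomic JBW} to extract the bijection $\sigma$ and the Jordan isomorphism $J\colon M_E\to N_E$, then assemble the global Jordan isomorphism as the direct sum of $J$ with the map $(x_p p)_{p\in\mathcal{D}_M}\mapsto (x_p\sigma(p))_{p\in\mathcal{D}_M}$ on the disengaged parts. The only minor remark is that your phrase ``Jordan isomorphisms between direct sums are exactly the direct sums of Jordan isomorphisms between matched summands'' overstates what you need (and is not literally true in general); you only use that a direct sum of Jordan isomorphisms is a Jordan isomorphism, which is immediate.
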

\begin{proof}
Suppose that $M_+$ and $N_+$ are order isomorphic, and let $f\colon M_+\to N_+$ be an order isomorphism. By Theorem~\ref{t:order isoms on atomic JBW} there is a bijection $\sigma\colon \mathcal{D}_M\to \mathcal{D}_N$ and a Jordan isomorphism $J\colon M_E\to N_E$. Then $G\colon M\to N$ defined for $x=(x_D,x_E)\in M$ with $x_D=(x_p p)_{p\in\mathcal{D}_M}$ by 
\[
G((x_D,x_E)):=((x_p\sigma(p))_{p\in\mathcal{D}_M},Jx_E),
\] is a Jordan isomorphism. The converse implication is immediate.
\end{proof}

\section{Order isomorphisms on JB-algebras}

The results in the previous section completely describe the order isomorphisms between cones of atomic JBW-algebras, and our goal is to investigate how these results can be used to study order isomorphisms between cones in general JB-algebras. A key observation is that any JB-algebra can be embedded isometrically, as a JB-subalgebra, into an atomic JBW-algebra, namely the atomic part of the bidual. We start by determining under which conditions an order isomorphism between cones of JB-algebras can be extended to an order isomorphism between the cones of the corresponding atomic JBW-algebras obtained via this embedding. It turns out that it is sufficient to extend to a $\sigma$-weak homeomorphism for the preduals of the atomic JBW-algebras, guaranteeing that the extension is an order isomorphism. Furthermore, by relating the ideal structure of a JB-algebra to central atoms of its bidual, we obtain an analogue of Theorem~\ref{thrm,ajbw} for cones of JB-algebras. 

\subsection{The atomic representation of a JB-algebra}

The canonical embedding of a JB-algebra $A$ into its bidual $\string^ \colon A\hookrightarrow A^{**}$ is not only an isometry, but also extends the product of $A$ to $A^{**}$ by \cite[Corollary~ 2.50]{AS}. Furthermore, let $z$ be the central projection in $A^{**}$ as in \cite[Lemma~3.42]{AS} such that 
\[
A^{**}=U_zA^{**}\oplus U_{z^\perp}A^{**}
\]
where $U_zA^{**}$ is atomic and $U_{z^{\perp}}A^{**}$ is purely non-atomic. In the sequel we will denote the atomic part $U_zA^{**}$ of $A^{**}$ by $A^{**}_a$. The quadratic representation $U_z\colon A^{**}\to A^{**}_a$ corresponding to the central projection $z$ defines a surjective Jordan homomorphism by \cite[Proposition~2.41]{AS}. Hence we obtain a Jordan homomorphism $U_z\circ \string^\colon A\to A^{**}_a$. It is a standard result for $C$*-algebras that the composition of the canonical embedding $\string^$ and the multiplication by $z$ is an isometric algebra embedding, see for example the preliminaries in \cite{Ake}, and the proof for JB-algebras is the same; see \cite[Proposition~1]{FR} for a proof for JB*-triples, which are a generalization of JB-algebras. Hence we can view $A$ as a JB-subalgebra of $A^{**}_a$. 

As $A^{**}_a$ is a JBW-algebra, it is a dual space, and it follows from \cite[Corollary~2.11]{RW} that it is the dual of 
\begin{align*}\label{eq:predual}
A':=\overline{\mathrm{Span}}\{\varphi\colon \varphi\ \mbox{is a pure state on }A\}\qquad\mbox{(norm closure in }A^*).
\end{align*}
In particular, this yields $A'=U_z^*A^*$. Indeed, if $\varphi$ is a pure state on $A$, then it is a normal pure state on $A^{**}$, so there is an atom $p\in A^{**}$ such that $\varphi(p)=1$. It follows that $0\le\varphi(z^\perp)\le\varphi(p^\perp)=0$, so $\varphi(z^\perp)=0$. Thus for any $x\in A^{**}=U_zA^{**}\oplus U_{z^\perp}A^{**}$ we have $-\|x\|z^\perp\le U_{z^\perp}x\le \|x\|z^\perp$ since $U_{z^\perp}$ is order preserving and $-\|x\|e\le x\le \|x\|e$, and so
\[
\varphi(x)=\varphi(Uzx)+\varphi(U_{z^\perp}x)=U_z^*\varphi (x).
\]  
Hence $A'\subseteq U_z^*A^*$ as $U_z^*A^*$ is norm closed. Conversely, if $\varphi$ is a state on $A$, then $U_z^*\varphi$ is $\sigma$-weakly continuous on $A^{**}$. Suppose that $U_z^*\varphi\neq 0$, then $\varphi(z)^{-1}U_z^*\varphi$ is a normal state on $A^{**}$. Since this state annihilates $U_{z^\perp}A^{**}$, it defines a normal state on the atomic part of $A^{**}$ and by \cite[Lemma~5.61]{AS} it follows that $U_z^*\varphi\in A'$. As the state space of $A$ generates $A^*$, this proves the inclusion $U_{z}^*A^*\subseteq A'$.

Since the cone in $A^{**}_a$ is monotone complete, our next objective is to study how the cone of $A$ lies inside the cone of $A^{**}_a$ with respect to bounded monotone increasing and decreasing nets, respectively. To this end, we introduce the following notation. For a subset $B\subseteq (A_a^{**})_+$ we denote by $B^m$ the set where the suprema of all bounded monotone increasing nets in $B$ are adjoined. Similarly, we denote by $B_m$ the subset of $(A_a^{**})_+$ where all the infima of bounded monotone decreasing nets in $B$ are adjoined. If we obtain $(A^{**}_a)_+$ from $B$ by adjoining suprema and infima inductively in any order, but in finitely many steps, we say that $B$ is {\it finitely monotone dense} in $(A_a^{**})_+$. A consequence of a result by Pedersen \cite[Theorem~2]{Ped} is that the cone of the self-adjoint part $\mathcal{A}_{sa}$ of a $C$*-algebra is finitely monotone dense in $(\mathcal{A}^{**}_{sa})_+$. In \cite[Theorem~4.4.10]{OS} the analogue of Pedersen's theorem is given for JB-algebras, where it is shown that the cone of a JB-algebra $A$ is finitely monotone dense in the cone of its bidual. The next proposition verifies that the cone of a JB-algebra is finitely monotone dense in the cone of the atomic part of its bidual.

\begin{proposition}\label{p:finitely monotone dense}
Let $A$ be a JB-algebra. Then $A_+$ is finitely monotone dense in $(A_a^{**})_+$.
\end{proposition}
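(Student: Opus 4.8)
The plan is to deduce this from the known statement for the full bidual: by \cite[Theorem~4.4.10]{OS} (the JB-algebraic analogue of Pedersen's theorem) the cone $A_+$ is finitely monotone dense in $(A^{**})_+$, and I will push this finite monotone density forward along the Jordan homomorphism $U_z\colon A^{**}\to A^{**}_a$. Recall the ingredients already assembled above: $A^{**}=A^{**}_a\oplus U_{z^\perp}A^{**}$ is a direct sum of JBW-algebras which is also an order direct sum, an element being positive precisely when both of its components are; $A$ is identified with the JB-subalgebra $U_z(A)$ of $A^{**}_a$; and $U_z$ is a surjective, norm-one, order preserving projection of $A^{**}$ onto $A^{**}_a$ restricting to the identity there. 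In particular $U_z$ maps bounded sets to bounded sets, and, being a surjective Jordan homomorphism (which sends the set of squares onto the set of squares), it maps $(A^{**})_+$ onto $(A^{**}_a)_+$.

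The one point that needs an argument is that $U_z$ takes the supremum of a bounded monotone increasing net to the supremum of the monotone image net, and similarly for infima of bounded monotone decreasing nets. This is immediate from the order direct sum decomposition: with respect to it $U_z$ is just the coordinate projection onto $A^{**}_a$, and in a direct product of partially ordered vector spaces each coordinate projection computes suprema and infima componentwise (bounded monotone nets in a JBW-algebra do have suprema and infima). Alternatively, $U_z$ is $\sigma$-weakly continuous and positive, and hence sends the $\sigma$-weak limit of a bounded monotone net, which is its order limit, to the order limit of the bounded monotone image net.

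With this property in hand the rest is bookkeeping. By \cite[Theorem~4.4.10]{OS} there is a finite list of operations, each either $B\mapsto B^m$ or $B\mapsto B_m$, whose successive application to $A_+\subseteq A^{**}$ produces $(A^{**})_+$; write $D_0:=A_+$ and let $D_k$ be the result after the first $k$ of them, so $D_n=(A^{**})_+$. Applying the same list, step for step, starting from $E_0:=U_z(A_+)$ --- which under our identification is exactly the cone of $A$ inside $A^{**}_a$ --- produces sets $E_0,\dots,E_n$. One then proves $U_z(D_k)\subseteq E_k$ by induction on $k$: the base case is the definition of $E_0$, and in the inductive step an element of $U_z(D_k)$ has the form $U_z b$ with $b\in D_k$, where $b$ either already lies in $D_{k-1}$, giving $U_z b\in U_z(D_{k-1})\subseteq E_{k-1}\subseteq E_k$, or is the supremum (respectively infimum) of a bounded monotone net in $D_{k-1}$, in which case the property of $U_z$ above makes $U_z b$ the supremum (respectively infimum) of the monotone image net, a net lying in $U_z(D_{k-1})\subseteq E_{k-1}$, so $U_z b\in E_k$. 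Taking $k=n$ yields $(A^{**}_a)_+=U_z\big((A^{**})_+\big)=U_z(D_n)\subseteq E_n\subseteq (A^{**}_a)_+$, so $E_n=(A^{**}_a)_+$; since $E_n$ is obtained from $A_+$ by the very same finite list of operations $B\mapsto B^m$, $B\mapsto B_m$, this is precisely the claim. I expect the only delicate step to be the supremum/infimum-preservation property of $U_z$; once that is granted the induction is routine, and finiteness of the process is guaranteed simply because we reuse verbatim the finite list provided by \cite[Theorem~4.4.10]{OS}.
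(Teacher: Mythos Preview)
Your proof is correct and follows essentially the same strategy as the paper: reduce to Pedersen's theorem for the full bidual \cite[Theorem~4.4.10]{OS} and push the finite monotone density forward along $U_z$, the key step being that $U_z$ sends the supremum (resp.\ infimum) of a bounded monotone net to the supremum (resp.\ infimum) of the image net, which is exactly the paper's inclusion $U_z(\Omega^m)\subseteq (U_z\Omega)^m$. The only difference is cosmetic: the paper verifies this key step by a direct computation with normal states showing $y=U_zx$, whereas you invoke the order direct sum structure (or $\sigma$-weak continuity of $U_z$); both justifications are valid and comparably short.
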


\begin{proof}
Let $A$ be a JB-algebra, canonically embedded into its bidual, and let $A_+\subseteq \Omega\subseteq A^{**}_+$. Furthermore, let $U_z$ be the Jordan homomorphism mapping $A$ into $A_a^{**}$. If $(x_i)_{i\in I}\subseteq \Omega$ is a bounded monotone increasing net with supremum $x$ in $A^{**}_+$, then the net $(U_zx_i)_{i\in I}$ is a bounded increasing net in $(A_a^{**})_+$ with supremum $y$ in $(A^{**}_a)_+$, as $A^{**}_a$ is a JBW-algebra and $U_z$ is order preserving. Since $U_z$ is the projection onto $A_a^{**}$, it follows that $U_zy=y$. For any normal state $\varphi$ on $A^{**}$ we have
\begin{align*}
\varphi(y-U_zx)&=\varphi(y-U_zx_i)+\varphi(U_zx_i-U_zx)=\varphi(U_zy-U^2_zx_i)+\varphi(U_zx_i-U_zx)\\&=U_z^*\varphi(y-U_zx_i)+U_z^*\varphi(x_i-x)\to 0
\end{align*}
since $U_z x_i\to y$ for $\sigma(A_a^{**},A')$, $x_i\to x$ for $\sigma(A^{**},A^*)$ and $U^*_z\varphi\in A'$. Hence $y=U_zx$ as the normal states separate the points of $A^{**}$. We have shown that 
\[
U_zA_+\subseteq U_z(\Omega^m)\subseteq (U_z\Omega)^m\subseteq (A_a^{**})_+
\]
and the fact that the analogous inclusions hold for $\Omega_m$ follows verbatim. Therefore, we conclude that the assertion holds as $A_+$ is finitely monotone dense in $A^{**}_+$ by \cite[Theorem~4.4.10]{OS}.
\end{proof}
The Kaplansky density theorem for JB-algebras \cite[Proposition~2.69]{AS} in conjunction with \cite[Proposition~2.68]{AS} states that the unit ball of a JB-algebra $A$, which is canonically embedded into its bidual, is $\sigma$-weakly dense in the unit ball of $A^{**}$. The unit ball of $A$ corresponds to the order interval $[-e,e]$ as it is an order unit space, so by applying the affine map $x\mapsto\frac{1}{2}(x+e)$, we find that consequently the unit interval $[0,e]$ of $A$ is $\sigma$-weakly dense in the unit interval $[0,e]$ of $A^{**}$. The analogue for the atomic representation also holds.

\begin{lemma}\label{s-dense}
The unit interval $[0,e]$ of a JB-algebra $A$ is $\sigma(A^{**}_a,A')$-dense in the unit interval $[0,e]$ of $A^{**}_a$.
\end{lemma}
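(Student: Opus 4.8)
The statement to prove is that the unit interval $[0,e]$ of $A$ is $\sigma(A^{**}_a,A')$-dense in the unit interval $[0,e]$ of $A^{**}_a$. The natural strategy is to transport the Kaplansky-type density statement for the full bidual (recalled in the paragraph preceding the lemma: $[0,e]_A$ is $\sigma(A^{**},A^*)$-dense in $[0,e]_{A^{**}}$) through the Jordan projection $U_z\colon A^{**}\to A^{**}_a$. The key point is that $U_z$ is $\sigma$-weakly-to-$\sigma(A^{**}_a,A')$ continuous: since $A'=U_z^*A^*$ (established in the subsection above), for any $\varphi\in A'$ we have $\varphi = U_z^*\psi$ for some $\psi\in A^*$, so $\varphi\circ U_z = U_z^*\psi\circ U_z = U_{z^2}^*\psi\cdot(\text{more precisely }) \psi\circ U_z\circ U_z$; because $U_z$ is an idempotent Jordan homomorphism with range $A^{**}_a$, one gets $\varphi(U_z x) = \psi(U_z x)=U_z^*\psi(x)=\varphi'(x)$ for a fixed $\varphi'\in A^*$, so $x\mapsto \varphi(U_z x)$ is $\sigma(A^{**},A^*)$-continuous. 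Hence $U_z$ maps $\sigma(A^{**},A^*)$-convergent nets to $\sigma(A^{**}_a,A')$-convergent nets.

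With that continuity in hand the argument is short. Let $b\in[0,e]_{A^{**}_a}$; then $b\in[0,e]_{A^{**}}$ as well, since $A^{**}_a$ is a subalgebra with the same unit and the order is inherited. By the Kaplansky density statement there is a net $(x_i)_{i\in I}$ in $[0,e]_A$ with $x_i\to b$ for $\sigma(A^{**},A^*)$. Apply $U_z$: the net $(U_z x_i)_{i\in I}$ converges to $U_z b = b$ for $\sigma(A^{**}_a,A')$ by the continuity observed above. Since $U_z$ is a unital positive (indeed Jordan homomorphism) map, $U_z x_i \in [0,e]_{A^{**}_a}$ for each $i$. This shows every element of $[0,e]_{A^{**}_a}$ is a $\sigma(A^{**}_a,A')$-limit of elements of $U_z[0,e]_A$. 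Finally, $U_z x_i$ need not lie in $[0,e]_A$ itself, but we may instead use the finite monotone density from Proposition~\ref{p:finitely monotone dense}, or more directly observe that under the identification of $A$ as a JB-subalgebra of $A^{**}_a$ (via $U_z\circ{\string^}$), the image of $[0,e]_A$ in $A^{**}_a$ is exactly $U_z[0,e]_A$; so the density of $U_z[0,e]_A$ in $[0,e]_{A^{**}_a}$ \emph{is} the assertion, once we recall that $A\hookrightarrow A^{**}_a$ is realized precisely as $x\mapsto U_z\hat{x}$.

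The one genuine subtlety — the step I would treat most carefully — is the continuity claim $\varphi\circ U_z \in A^*$ for $\varphi\in A'$: it relies on $A'=U_z^*A^*$ together with the fact that $U_z$ is idempotent, so that $U_z^*\psi\circ U_z = U_z^*\psi$ holds as functionals (equivalently, $U_z^*\psi$ already annihilates $U_{z^\perp}A^{**}$, so precomposing with $U_z$ does not change it). This is exactly the computation $\varphi(x)=\varphi(U_z x)$ carried out in the displayed equation of the preceding subsection, applied with $\varphi\in A'$ rather than a general state; invoking that computation makes the continuity immediate. Everything else is a direct transport of the already-established Kaplansky density theorem for $A^{**}$ through the $\sigma$-weakly continuous unital Jordan projection $U_z$, using that $U_z$ preserves the order interval $[0,e]$.
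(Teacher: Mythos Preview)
Your proposal is correct and follows essentially the same route as the paper: take $b\in[0,e]_{A^{**}_a}$, invoke Kaplansky density in $A^{**}$ to get a net $(x_i)$ in $[0,e]_A$ converging $\sigma$-weakly to $b$, then push through $U_z$ and use $U_z b=b$ together with the identification $A\hookrightarrow A^{**}_a$ via $U_z\circ\string^$. The only cosmetic difference is in the continuity step: the paper simply notes that $U_z$ is $\sigma$-weakly continuous and that $A'\subseteq A^*$, so $\sigma(A^{**},A^*)$-convergence of $(U_z x_i)$ immediately gives $\sigma(A^{**}_a,A')$-convergence, whereas you argue via $A'=U_z^*A^*$ and idempotence of $U_z$; both are fine, and your hesitation about whether $U_z x_i\in[0,e]_A$ is correctly resolved by the identification you state at the end.
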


\begin{proof}
Let $x$ be in the unit interval of $A_a^{**}$. Then $x$ lies in the unit interval of $A^{**}$ and $U_zx=x$. By the Kaplansky density theorem for JB-algebras \cite[Proposition~2.69]{AS} in conjunction with \cite[Proposition~2.68]{AS} there is a net $(x_i)_{i\in I}$ in the unit interval of $A$ that converges $\sigma$-weakly to $x$. But then the net $(U_zx_i)_{i\in I}$ lies in the unit interval of $A$ and converges $\sigma$-weakly, and therefore also for the $\sigma(A^{**}_a,A')$-topology, to $U_zx=x$.  
\end{proof}

\subsection{Extending the order isomorphism}

Let $A$ and $B$ be JB-algebras and $f\colon A_+\to B_+$ an order isomorphism. Our aim now is to extend $f$ to an order isomorphism from $(A_a^{**})_+$ onto $(B_a^{**})_+$. Since $A_+$ and $B_+$ are finitely monotone dense in $(A_a^{**})_+$ and $(B_a^{**})_+$, respectively, by Proposition~\ref{p:finitely monotone dense}, it turns out that it is sufficient to extend $f$ to a homeomorphism with respect to the $\sigma(A_a^{**},A')$-topology and the $\sigma(B_a^{**},B')$-topology.

\begin{proposition}\label{extoiso} 
Let $A$ and $B$ be JB-algebras and suppose $f\colon A_+\rightarrow B_+$ is an order isomorphism that extends to a homeomorphism $\hat{f}\colon (A^{**}_a)_+\rightarrow (B^{**}_a)_+$  with respect to the $\sigma(A_a^{**},A')$-topology and the $\sigma(B_a^{**},B')$-topology. Then the extension $\hat{f}$ is an order isomorphism.
\end{proposition}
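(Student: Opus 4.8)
The plan is to exploit the fact, established in Proposition~\ref{p:finitely monotone dense}, that $A_+$ is finitely monotone dense in $(A_a^{**})_+$, so that $\hat f$ is completely determined by $f$ together with its compatibility with monotone limits; and then to extract that compatibility from the hypothesis that $\hat f$ is a $\sigma$-weak homeomorphism. First I would record the elementary reduction: to show $\hat f$ is an order isomorphism it suffices to show that both $\hat f$ and $\hat f^{-1}$ are order preserving, and by symmetry (the inverse of a $\sigma$-weak homeomorphism extending the order isomorphism $f^{-1}$) it is enough to prove $\hat f$ is order preserving on $(A_a^{**})_+$.

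The key step is to show that $\hat f$ preserves suprema of bounded monotone increasing nets, and likewise infima of bounded monotone decreasing nets. So let $(x_i)_{i\in I}$ be a bounded increasing net in $(A_a^{**})_+$ with supremum $x$. By Lemma~\ref{l:net of projections} and the $\sigma$-weak continuity of order-bounded monotone convergence in a JBW-algebra (a bounded increasing net converges $\sigma$-weakly to its supremum, by \cite[Proposition~2.5]{AS}), we have $x_i\to x$ for $\sigma(A_a^{**},A')$. Since $\hat f$ is continuous for the stated topologies, $\hat f(x_i)\to \hat f(x)$ $\sigma$-weakly in $(B_a^{**})_+$. On the other hand $(\hat f(x_i))_{i\in I}$ is itself a bounded increasing net: it is increasing because $\hat f$ extends $f$ on $A_+$ and is a homeomorphism, but more carefully, monotonicity of $\hat f$ on $A_+$ plus density should be leveraged — here is where I would be careful. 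The cleanest route is to first prove $\hat f$ is order preserving directly from monotone density: given $a\le b$ in $(A_a^{**})_+$, write each as an iterated monotone limit of elements of $A_+$ and use that $\hat f$ sends $\sigma$-weakly convergent monotone nets to $\sigma$-weakly convergent nets whose limit is the image; combined with the fact that the cone $(B_a^{**})_+$ is $\sigma$-weakly closed, one concludes $\hat f(a)\le\hat f(b)$ by an induction on the number of monotone-limit steps needed to reach $a$ and $b$ from $A_+$. The base case $a,b\in A_+$ is exactly that $f$ is order preserving.

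Concretely the induction runs as follows. For $B\subseteq (A_a^{**})_+$ with $A_+\subseteq B$ and $\hat f|_B$ already known to be order preserving and to respect $\sigma$-weak monotone limits, consider $B^m$. A point $y\in B^m$ is $\sup_i y_i$ for a bounded increasing net $(y_i)\subseteq B$; then $y_i\to y$ $\sigma$-weakly, so $\hat f(y_i)\to \hat f(y)$ $\sigma$-weakly, and since $(\hat f(y_i))$ is increasing (by the inductive hypothesis on $B$) with a $\sigma$-weak limit in the $\sigma$-weakly closed cone, that limit is its supremum; hence $\hat f(\sup_i y_i)=\sup_i \hat f(y_i)$. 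Monotonicity on $B^m$ then follows: if $y=\sup_i y_i\le y'=\sup_j y'_j$ with $y_i,y'_j\in B$, a standard argument (replacing $y'_j$ by $y'_j\vee$ suitable elements, or simply noting $y_i\le y\le y'$ and passing to the limit in $\hat f(y_i)\le \hat f(y')$, which holds once $\hat f(y_i)\le \hat f(y')$ is established for $y_i\in B$, $y'\in B^m$ — this sub-step again uses $y_i\le y'_j$ eventually is false in general, so one argues via $y_i = \sup_j (y_i\wedge y'_j)$ is also false; the safe argument is: $\hat f(y_i)\to\hat f(y)$ and each $\hat f(y_i)\le \hat f(y')$ because $y_i\le y'$ and $y_i\in B$, $y'\in B^m$, which is the mixed case one proves first by the same limiting device) gives $\hat f(y)\le \hat f(y')$. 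The case of $B_m$ is symmetric using bounded decreasing nets and their $\sigma$-weak convergence to their infima. After finitely many such steps we exhaust $(A_a^{**})_+$ by Proposition~\ref{p:finitely monotone dense}, so $\hat f$ is order preserving on all of $(A_a^{**})_+$. Applying the identical argument to $\hat f^{-1}$ — which extends the order isomorphism $f^{-1}$ and is a $\sigma$-weak homeomorphism, and to which Proposition~\ref{p:finitely monotone dense} applies with the roles of $A$ and $B$ swapped — shows $\hat f^{-1}$ is order preserving, and therefore $\hat f$ is an order isomorphism.

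The main obstacle is the bookkeeping in the induction, specifically the mixed monotonicity step comparing an element of the current stage $B$ with a monotone limit in $B^m$ (or $B_m$): one must make sure the limiting argument $\hat f(y_i)\le\hat f(y')$ for $y_i\in B$, $y'\in B^{m}$ is justified before asserting full monotonicity on $B^m$, and that the cone $(B_a^{**})_+$ being $\sigma$-weakly closed is genuinely what lets one pass inequalities through the $\sigma$-weak limit. Everything else — the reduction to order preservation, the symmetry in $\hat f^{-1}$, the identification of monotone limits with $\sigma$-weak limits — is standard JBW-algebra theory already cited in the preliminaries.
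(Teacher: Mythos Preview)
Your inductive framework via finite monotone density is exactly the paper's, and the step ``$\hat f(\sup_i y_i)=\sup_i \hat f(y_i)$ for an increasing net $(y_i)\subseteq B$'' is fine: the $\sigma$-weak limit of an increasing net, if it exists, is its supremum by the usual normal-state separation argument.

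The genuine gap is precisely the one you flag but do not close: the mixed comparison $\hat f(y)\le\hat f(y')$ for $y\in B$ and $y'=\sup_j y'_j\in B^m$ with $y\le y'$. Your proposed ``same limiting device'' is the net $(y'_j)$, but you have already correctly observed that $y\le y'_j$ need not hold for any $j$ and that meets $y\wedge y'_j$ are unavailable; so pushing $\hat f(y'_j)\to\hat f(y')$ through gives you nothing about $\hat f(y)$. This is not bookkeeping --- without a further ingredient the induction stalls at the first step beyond $A_+$.

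The paper supplies that ingredient: Lemma~\ref{s-dense}, the $\sigma$-weak density of the unit interval of $A$ in that of $A_a^{**}$. Instead of approximating $y'$ by the given monotone net, one approximates the \emph{difference} $y'-y\in(A_a^{**})_+$ by a net $(z_k)$ in $A_+$. Then $y+z_k\to y'$ $\sigma$-weakly, and crucially $y\le y+z_k$ with both sides in $\Omega$ (the sets arising in the induction satisfy $\Omega+A_+\subseteq\Omega$, an easy induction since suprema and infima commute with translation by a fixed element); hence $\hat f(y)\le\hat f(y+z_k)\to\hat f(y')$ by continuity and $\sigma$-weak closedness of the cone. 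The companion implication \eqref{inv} in the paper uses the same density trick applied to $\hat f^{-1}$. So your plan is right, but to make it go through you must invoke Lemma~\ref{s-dense} (or an equivalent Kaplansky-type density statement) at the mixed step; the monotone nets alone are not enough.
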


\begin{proof} 
Let $f\colon A_+\rightarrow B_+$ be an order isomorphism and let $\hat{f}\colon (A^{**}_a)_+\rightarrow (B^{**}_a)_+$ be a homeomorphism with respect to the $\sigma(A_a^{**},A')$-topology and the $\sigma(B_a^{**},B')$-topology that extends $f$. Suppose that $A_+\subseteq \Omega\subseteq (A^{**}_a)_+$ and $B_+\subseteq \Theta\subseteq (B^{**}_a)_+$ are subsets for which $\hat{f}$ restricts to an order isomorphism from $\Omega$ onto $\Theta$. We argue that $\hat{f}$ also restricts to an order isomorphism from $\Omega^m$ onto $\Theta^m$, and from $\Omega_m$ onto $\Theta_m$. The assertion then follows as $A_+$ and $B_+$ are finitely monotone dense in $(A^{**}_a)_+$ and $(B^{**}_a)_+$ respectively, by Proposition~\ref{p:finitely monotone dense}.

We derive some useful properties of $\hat{f}$. For all $x\in \Omega^m$ we have

\begin{equation}\label{sup} 
\hat{f}(x)=\sup\left\{\hat{f}(y)\colon y\in \Omega,\;y\leq x\right\}.
\end{equation}
To see this, let $x\in\Omega^m$. We first argue that $\hat{f}(x)$ is an upper bound of $\hat{f}(y)$ for all $y\in \Omega$ with $y\leq x$. To that end, suppose $y\in\Omega$ with $y\leq x$. Remark that $x-y\in (A_a^{**})_+$. After rescaling we can apply Lemma~\ref{s-dense} to obtain a net $(y_i)_{i\in I}$ in $A_+$ that converges to $x-y$. By the continuity of $\hat{f}$, it follows that $\hat{f}(y_i+y)$ converges to $\hat{f}(x)$. By our assumption that $\hat{f}$ is order preserving on $\Omega$ we have $\hat{f}(y)\le \hat{f}(y_i+y)$ for all $i\in I$ and therefore $\hat{f}(y)\le \hat{f}(x)$ follows as $(B_a^{**})_+$ is closed. Suppose now that $z\in (B^{**}_a)_+$ is such that $\hat{f}(y)\le z$ for all $y\in\Omega$ with $y\le x$. As $x\in\Omega^m$, there is a monotone increasing net $(x_i)_{i\in I}$ in $\Omega$ with supremum $x$. Then $(x_i)_{i\in I}$ converges to $x$ by monotone completeness and so $\hat{f}(x_i)$ converges to $\hat{f}(x)$. Hence $\hat{f}(x)\leq z$, again as $(B_a^{**})_+$ is closed, showing \eqref{sup} holds. 

Secondly, for all $x\in\Omega^m$ we have 
\begin{equation}\label{inv} 
y\in \Omega\ \mbox{and }\hat{f}(y)\leq \hat{f}(x)\ \mbox{imply } y\leq x.
\end{equation}
Indeed, let $y\in\Omega$ with $\hat{f}(y)\leq \hat{f}(x)$ and $(z_i)_{i\in I}$ a net in $B_+$ that converges to $\hat{f}(x)-\hat{f}(y)$. As $\hat{f}^{-1}$ is continuous we infer $\hat{f}^{-1}(z_i+\hat{f}(y))$ converges to $x$. For all $i\in I$ we have $y=\hat{f}^{-1}(\hat{f}(y))\le\hat{f}^{-1}(z_i+\hat{f}(y))$ since $\hat{f}^{-1}$ is order preserving on $\Theta$. Then $y\leq x$ follows from the fact that $(A_a^{**})_+$ is closed. This shows \eqref{inv}. Now for all $x,y\in \Omega^m$ we have 
\begin{align*} 
x\leq y \iff &\left\{z\in \Omega\colon z\leq x\right\}\subseteq \left\{z\in \Omega\colon z\leq y\right\} \\ 
\iff &\left\{\hat{f}(z)\colon z\in \Omega,\ z\le x\right\}\subseteq \left\{\hat{f}(z)\colon z\in \Omega,\ z\le y\right\} \\ 
\implies &\sup\left\{\hat{f}(z)\colon z\in \Omega,\ z\le x\right\}\le\sup\left\{\hat{f}(z)\colon z\in \Omega,\ z\le y\right\} \\ 
\implies &\hat{f}(x)\leq \hat{f}(y),
\intertext{where the last implication is due to \eqref{sup}. Conversely, by \eqref{inv} we have for all $x,y\in\Omega^m$ that}
\hat{f}(x)\leq\hat{f}(y) \implies &\{z\in\Omega\colon z\le x\}\subseteq\{z\in\Omega\colon z\le y\} \\ 
\implies &x\leq y.\end{align*}

This shows that $\hat{f}$ is an order embedding of $\Omega^m$ into $(B_a^{**})_+$, and it remains to be shown that $\hat{f}$ maps $\Omega^m$ onto $\Theta^m$. For $x\in\Omega^m$ and a monotone increasing net $(x_i)_{i\in I}$ in $\Omega$ with supremum $x$ we have that $\hat{f}(x)$ is the supremum of the monotone increasing net $(\hat{f}(x_i))_{i\in I}$ which is contained in $\Theta$, showing that $\hat{f}$ maps $\Omega^m$ into $\Theta^m$. Similarly, $\hat{f}^{-1}$ maps $\Theta^m$ into $\Omega^m$ and we conclude that $\hat{f}$ restricts to an order isomorphism from $\Omega^m$ to $\Theta^m$. Analogously, $\hat{f}$ restricts to an order isomorphism from $\Omega_m$ to $\Theta_m$ by reversing all inequalities and replacing the suprema by infima.
\end{proof}

To the best of our knowledge it is presently unknown whether every order isomorphism between cones of JB-algebras always extends to a homeomorphism between the cones of the atomic parts of their biduals. If this open question is answered in the positive, then the results in the next section characterize the JB-algebras for which all order isomorphisms between their cones are linear.

\subsection{Automatic linearity of order isomorphisms}

Provided that an order isomorphism between cones of JB-algebras extends to an order isomorphism between the cones of the corresponding atomic parts of their biduals, its linearity depends on the absence of central atoms in these biduals by Theorem~\ref{thrm,ajbw}. Therefore, it is crucial to understand for which JB-algebras this absence is guaranteed. Since the disengaged part of an atomic JBW-algebra is an associative direct summand by Proposition~\ref{jbw}, one leads to believe that the existence of central atoms in the bidual corresponds to having a non-zero associative direct summands in the original JB-algebra. This, however, is not the case, as is illustrated by the following example\footnote{The authors would like to express their gratitude to M. Wortel for suggesting this space.}.
\begin{example}
Consider the JB-algebra $C\left([0,1];\mathrm{Sym}_2(\mathbb{R})\right)$ consisting of symmetric $2\times 2$ matrices with continuous functions on $[0,1]$ as entries. Note that the dual of this JB-algebra is $M([0,1];\mathrm{Sym}_2(\mathbb{R}))$ consisting of symmetric $2\times 2$ matrices with regular Borel measures on $[0,1]$ as entries with the dual pairing
\[
\left\langle
\begin{bmatrix}
x_1 & x_3\\ x_3 & x_2
\end{bmatrix},\begin{bmatrix}
\mu_1 & \mu_3\\ \mu_3 & \mu_2
\end{bmatrix}
\right\rangle=\int_0^1x_1(t)\ d\mu_1(t)+\int_0^1x_2(t)\ d\mu_2(t)+2\int_0^1x_3(t)\ d\mu_3(t).
\]  
Define the JB-subalgebra $A$ by
\begin{align}\label{e:alg example}
A:=\left\{\begin{bmatrix}
x_1 & x_3\\ x_3 & x_2
\end{bmatrix}\in C\left([0,1];\mathrm{Sym}_2(\mathbb{R})\right)\colon x_3(t)=0\ \mbox{for all }0\le t\le \textstyle{\frac{1}{2}}\right\}.
\end{align}
Note that $A$ does not have any non-trivial direct summands as $[0,1]$ is connected and $\text{Sym}_2(\mathbb{R})$ is a factor. In particular, $A$ does not contain an associative direct summand. However, the atomic part of the bidual equals 
\[
A^{**}_a=\left\{\begin{bmatrix}
x_1 & x_3\\ x_3 & x_2
\end{bmatrix}\in \ell^\infty\left([0,1];\mathrm{Sym}_2(\mathbb{R})\right)\colon x_3(t)=0\ \mbox{for all }0\le t\le \textstyle{\frac{1}{2}}\right\}
\]
% the annihilator of $A$ in $M([0,1];\mathrm{Sym}_2(\mathbb{R}))$ equals
%\[
%A^{\perp}=\left\{\begin{bmatrix}
%0 & \mu_3\\ \mu_3 & 0
%\end{bmatrix}\colon \mathrm{supp}(\mu_3)\subseteq[0,{\textstyle \frac{1}{2}}]\right\}
%\]
%from which we conclude that the dual of $A$, which is isomorphic to $M([0,1];\mathrm{Sym}_2(\mathbb{R}))/A^\perp$, equals 
%\[
%A^*=\left\{\begin{bmatrix}
%\mu_1 & \mu_3\\ \mu_3 & \mu_2
%\end{bmatrix}\in M([0,1];\mathrm{Sym}_2(\mathbb{R}))\colon \mathrm{supp}(\mu_3)\subseteq({\textstyle \frac{1}{2}},1]\right\}
%\]
and the elements of the form 
\[
\begin{bmatrix}
\delta_t & 0\\ 0 & 0
\end{bmatrix}\quad\mbox{or}\quad \begin{bmatrix}
0 & 0\\ 0 & \delta_t
\end{bmatrix}
\]
where $\delta_t$ denotes the point mass function at $t$ for $0\le t\le \frac{1}{2}$ are central atoms in $A^{**}_a$. 
\end{example}
This example shows that an alternative condition on the JB-algebra is needed.

\begin{lemma}\label{l:codimension1}
Let $A$ be a JB-algebra. Then $A^{**}$ contains a central atom if and only if $A$ contains a norm closed ideal of codimension one.
\end{lemma}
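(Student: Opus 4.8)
The plan is to connect central atoms in $A^{**}$ with one-dimensional Jordan representations of $A$, and then observe that kernels of such representations are precisely the norm closed ideals of codimension one. I would argue both implications through the correspondence between a central atom $p$ and the associated state $\varphi_p$ defined by $U_p x = \varphi_p(x) p$ for $x \in A^{**}$ (using \cite[Lemma~3.29]{AS}, which gives $U_p A^{**} = \mathbb{R}p$ for an atom $p$, so such a scalar-valued $\varphi_p$ exists and is a normal state).

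First, suppose $A^{**}$ contains a central atom $p$. Since $p$ is central, it decomposes $A^{**} = U_p A^{**} \oplus U_{p^\perp} A^{**} = \mathbb{R}p \oplus U_{p^\perp}A^{**}$ as a direct sum of JBW-algebras, so the projection onto the first summand is a Jordan homomorphism $A^{**} \to \mathbb{R}p \cong \mathbb{R}$, namely $x \mapsto \varphi_p(x)$. Composing with the canonical embedding $A \hookrightarrow A^{**}$ yields a nonzero Jordan homomorphism $\psi \colon A \to \mathbb{R}$; since $\psi(e) = \varphi_p(e) = 1$, it is surjective, and its kernel $I := \ker \psi$ is a norm closed (as $\psi$ is bounded) ideal of $A$ of codimension one. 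The only subtlety here is verifying that $\ker\psi$ is genuinely a Jordan ideal: this follows because $\psi$ is a Jordan homomorphism onto the associative algebra $\mathbb{R}$, so for $a \in I$ and $b \in A$ we have $\psi(a \circ b) = \psi(a)\psi(b) = 0$, hence $a \circ b \in I$.

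Conversely, suppose $I \subseteq A$ is a norm closed ideal of codimension one. Then $A/I \cong \mathbb{R}$ as a JB-algebra, and the quotient map $\pi \colon A \to A/I \cong \mathbb{R}$ is a unital Jordan homomorphism, hence a state $\varphi$ on $A$ which is multiplicative: $\varphi(a \circ b) = \varphi(a)\varphi(b)$. Such a $\varphi$ is a pure state (a multiplicative state is an extreme point of the state space by the standard argument: if $\varphi = \tfrac{1}{2}(\varphi_1 + \varphi_2)$, then on the kernel, which has codimension one, both $\varphi_i$ must vanish since $\varphi_i \leq 2\varphi$ forces $\varphi_i$ to annihilate the positive part of $\ker\varphi$, and hence $\varphi_i = \varphi$). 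By the discussion preceding Proposition~\ref{p:finitely monotone dense}, a pure state on $A$ is a normal pure state on $A^{**}$, so there is an atom $p \in A^{**}$ with $\varphi(p) = 1$; in fact $p$ is the support projection of $\varphi$. It remains to show $p$ is central in $A^{**}$. The key point is that $\varphi$ being multiplicative on the $\sigma$-weakly dense subalgebra $A$ extends to $\varphi$ being multiplicative on all of $A^{**}$: since multiplication is separately $\sigma$-weakly continuous and $\varphi$ is $\sigma$-weakly continuous, the identity $\varphi(a \circ b) = \varphi(a)\varphi(b)$ passes from $A$ to $A^{**}$. A normal multiplicative state on a JBW-algebra has a central support projection — indeed $U_p x = \varphi(x)p$ can be checked by a short computation using multiplicativity, giving $U_p A^{**} = \mathbb{R}p$ and the direct sum decomposition, which is exactly centrality of $p$ (or invoke \cite[Lemma~3.29]{AS} in reverse together with \cite[Proposition~2.41]{AS}). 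Thus $p$ is a central atom of $A^{**}$.

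I expect the main obstacle to be the careful bookkeeping in the converse direction: showing that a multiplicative (equivalently, pure with one-dimensional support block) state on $A$ remains multiplicative, or at least has a central support, after passing to the bidual. The cleanest route is to show that the identity $\varphi(x \circ y) = \varphi(x)\varphi(y)$ is preserved under separate $\sigma$-weak limits — fix $y \in A$ and let $x$ range over a $\sigma$-weakly convergent net in $A$ (using Kaplansky density, \cite[Proposition~2.69]{AS}), then fix $x \in A^{**}$ and vary $y$ — and then to derive centrality of the support projection $p$ from multiplicativity of $\varphi$ on $A^{**}$. Everything else reduces to the standard dictionary between codimension-one ideals, unital one-dimensional Jordan quotients, multiplicative states, and their support projections, together with the direct-sum decomposition induced by a central projection recorded in \cite[Proposition~2.41]{AS}.
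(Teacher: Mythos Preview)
Your forward direction is essentially the paper's: both produce the Jordan homomorphism $U_p\colon A^{**}\to\mathbb{R}p$ from the central atom $p$, restrict to $A$, and take the kernel.

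Your converse direction is correct in outline but differs from the paper and has a soft spot at the end. The paper avoids the purity discussion and the density argument entirely: it invokes \cite[Theorem~2.65]{AS}, which says that any Jordan homomorphism from $A$ into a JBW-algebra extends uniquely to a normal Jordan homomorphism on $A^{**}$. Applied to the quotient map $\pi\colon A\to\mathbb{R}$, this immediately gives a normal homomorphism $\tilde\pi\colon A^{**}\to\mathbb{R}$, whose kernel is a $\sigma$-weakly closed ideal; by \cite[Proposition~2.39]{AS} it equals $U_cA^{**}$ for a central projection $c$, and then $U_{c^\perp}A^{**}\cong\mathbb{R}$ forces $c^\perp$ to be a central atom via \cite[Lemma~3.29]{AS}. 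Your two-step Kaplansky density argument to propagate the identity $\varphi(x\circ y)=\varphi(x)\varphi(y)$ from $A$ to $A^{**}$ is a valid hands-on proof of the special case of \cite[Theorem~2.65]{AS} you need, so nothing is lost there except economy.

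Where you should be careful is the final step. The computation $U_px=\varphi(x)p$ does follow from multiplicativity (apply $\varphi$ to $U_px=2(p\circ x)\circ p-p\circ x$), but this only identifies the Peirce-$2$ component of $x$ relative to $p$; it does \emph{not} by itself show that the Peirce-$1$ space $\{p,A^{**},p^\perp\}$ vanishes, which is what centrality of $p$ means. Neither does the combination of \cite[Lemma~3.29]{AS} and \cite[Proposition~2.41]{AS} run ``in reverse'' to give this. The clean way to finish, once you have $\varphi$ multiplicative and normal on $A^{**}$, is exactly the paper's: $\ker\varphi$ is a $\sigma$-weakly closed ideal, hence equals $U_cA^{**}$ for a central $c$ by \cite[Proposition~2.39]{AS}, and then $c^\perp$ is a central atom which must coincide with your support atom $p$. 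So your route works, but its last step is completed by the same structural lemma the paper uses, not by the direct computation you sketch.
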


\begin{proof}
Suppose $p\in A^{**}$ is a central atom. Then $U_p\colon A^{**}\to \mathbb{R}p$ is a $\sigma$-weakly continuous Jordan homomorphism by \cite[Proposition~2.4]{AS} and \cite[Proposition~2.41]{AS}. Hence the corresponding multiplicative functional $\varphi_p$ defined by $U_px=\varphi_p(x) p$ for all $x\in A^{**}$ is an element of $A^*$. We conclude that $\ker\varphi$ is a norm closed ideal in $A$ of codimension one.

Conversely, if $I$ is a norm closed ideal in $A$ of codimension one, then $A/I\cong\mathbb{R}$ and the corresponding quotient map $\pi\colon A\to \mathbb{R}$ extends uniquely to a normal homomorphism $\tilde{\pi}\colon A^{**}\to \mathbb{R}$ by \cite[Theorem~2.65]{AS}. Since $\ker\tilde{\pi}$ is a $\sigma$-weakly closed ideal of $A^{**}$, it follows from \cite[Proposition~2.39]{AS} that there is a central projection $p\in A^{**}$ such that $\ker\tilde{\pi}=U_pA^{**}$. But as this implies $U_{p^\perp}A^{**}\cong\mathbb{R}$, the central projection $p^\perp$ must be an atom by \cite[Lemma~3.29]{AS}. 
\end{proof}

\begin{theorem}\label{t:main theorem jb}
Let $A$ and $B$ be JB-algebras such that $A$ does not contain any norm closed ideals of codimension one and let $f\colon A_+\to B_+$ be an order isomorphism. Then $f$ is linear if and only if it extends to a homeomorphism $\hat{f}\colon (A_a^{**})_+\to (B_a^{**})_+$ with respect to the $\sigma(A_a^{**},A')$-topology and the $\sigma(B_a^{**},B')$-topology.
\end{theorem}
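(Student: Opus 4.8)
The plan is to prove the two implications separately; note in advance that the converse --- ``linear $\Longrightarrow$ extends to a homeomorphism'' --- will in fact hold for arbitrary JB-algebras, and that the hypothesis on $A$ enters only in the forward direction. For that forward direction, assume $f$ extends to a homeomorphism $\hat f\colon (A_a^{**})_+\to (B_a^{**})_+$ for the stated weak topologies. By Proposition~\ref{extoiso} the extension $\hat f$ is an order isomorphism between the cones of the atomic JBW-algebras $A_a^{**}$ and $B_a^{**}$. Since $A$ contains no norm closed ideal of codimension one, Lemma~\ref{l:codimension1} shows $A^{**}$ has no central atom, and as $A_a^{**}$ is an orthogonal direct summand of $A^{**}$, any central atom of $A_a^{**}$ would be a central atom of $A^{**}$; hence $A_a^{**}$ has no central atom. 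Theorem~\ref{thrm,ajbw} then forces $\hat f$ to be linear, so there is a linear $\hat T\colon A_a^{**}\to B_a^{**}$ agreeing with $\hat f$ on $(A_a^{**})_+$. Its restriction to the subspace $A$ is linear, agrees with $f$ on $A_+$, and --- because $A_+$ generates $A$ and $f[A_+]\subseteq B_+\subseteq B$ --- takes values in $B$; thus $f$ is linear.

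For the converse, suppose $f$ is linear, implemented by a linear order isomorphism $T\colon A\to B$ (automatically bijective with positive inverse, the cones being generating). By Proposition~\ref{p:order_isomorphism}, $T=U_yJ$ with $y=(Te)^{1/2}\in\CB$ and $J\colon A\to B$ a Jordan isomorphism. I would pass to the biduals. The bidual $J^{**}\colon A^{**}\to B^{**}$ is $\sigma$-weakly continuous, and a standard argument --- using the $\sigma$-weak density of $A$ in $A^{**}$ together with the separate $\sigma$-weak continuity of the Jordan product --- shows $J^{**}$ is again a Jordan isomorphism; the quadratic representation of $y\in B\subseteq B^{**}$ extends $U_y$ to a $\sigma$-weakly continuous order automorphism of $(B^{**})_+$; and the composite $U_yJ^{**}$, being the unique $\sigma$-weakly continuous extension of $T$, equals $T^{**}$. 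Next I would check that $T^{**}$ carries $A_a^{**}$ onto $B_a^{**}$. A Jordan isomorphism preserves minimal projections and hence the splitting of a JBW-algebra into its atomic and purely non-atomic parts, so $J^{**}$ sends the central projection $z_A\in A^{**}$ of \cite[Lemma~3.42]{AS} to the corresponding $z_B\in B^{**}$ and restricts to a Jordan isomorphism $A_a^{**}\to B_a^{**}$. For $U_y$, write $y=y_a+y_n$ with $y_a=U_{z_B}y\in(B_a^{**})_+$ and $y_n=U_{z_B^\perp}y$ in the complementary central summand; these parts are orthogonal, the mixed term $z\mapsto\{y_a,z,y_n\}$ vanishes identically, so $U_y=U_{y_a}+U_{y_n}$ splits along the decomposition, $U_y$ maps $B_a^{**}$ onto itself, and its restriction there is $U_{y_a}$, an automorphism of the cone $(B_a^{**})_+$. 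Hence $T^{**}$ restricts to a linear order isomorphism $A_a^{**}\to B_a^{**}$, namely $U_{y_a}\bigl(J^{**}|_{A_a^{**}}\bigr)$. Finally, because $A'=U_{z_A}^*A^*$ (and likewise for $B$), the $\sigma(A_a^{**},A')$-topology is exactly the subspace topology inherited from the $\sigma$-weak topology of $A^{**}$, so the $\sigma$-weak continuity of $T^{**}$ and of $(T^{**})^{-1}=(T^{-1})^{**}$ makes $\hat f:=T^{**}|_{(A_a^{**})_+}$ the desired homeomorphism, and $T^{**}|_A=T$ shows it extends $f$.

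The forward direction is essentially a chaining of Proposition~\ref{extoiso}, Lemma~\ref{l:codimension1} and Theorem~\ref{thrm,ajbw}, so I expect no trouble there. The real content sits in the converse and is pure bookkeeping about the bidual: confirming that $J^{**}$ is honestly a Jordan isomorphism taking $z_A$ to $z_B$, that $U_y$ respects the atomic--purely non-atomic decomposition of the bidual (the vanishing of the mixed triple $\{y_a,\cdot\,,y_n\}$), and that $\sigma(A_a^{**},A')$ coincides with the restriction of the $\sigma$-weak topology of $A^{**}$. None of these steps is deep, but together they are where the statement acquires its content, and getting the bidual of $U_yJ$ to land inside $B_a^{**}$ is the part I would be most careful about.
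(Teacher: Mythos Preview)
Your forward direction (homeomorphic extension $\Rightarrow$ linear) is exactly the paper's argument: chain Proposition~\ref{extoiso}, Lemma~\ref{l:codimension1}, and Theorem~\ref{thrm,ajbw}; you even supply the small step the paper leaves implicit, namely that a central atom of the summand $A_a^{**}$ is already central in $A^{**}$.

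For the converse (linear $\Rightarrow$ homeomorphic extension) your argument is correct but takes a different route. The paper dualises: the adjoint $(U_yJ)^*\colon B^*\to A^*$ is a linear order isomorphism, hence sends extreme rays of $B^*_+$ (the rays through pure states) bijectively to those of $A^*_+$, so it restricts to a bijection $B'\to A'$; dualising once more gives a weak-$*$ bicontinuous linear bijection $(U_yJ)'^*\colon A_a^{**}\to B_a^{**}$, and separation of $A$ by pure states shows it extends $f$. You instead push forward: extend $T=U_yJ$ to $T^{**}$, verify that $J^{**}$ is a Jordan isomorphism carrying $z_A$ to $z_B$, split $U_y$ along the central decomposition $B^{**}=B_a^{**}\oplus U_{z_B^\perp}B^{**}$ via $U_y=U_{y_a}+U_{y_n}$, and restrict. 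Both work; the paper's predual argument is shorter and avoids the bookkeeping about $J^{**}(z_A)=z_B$ and the vanishing of the mixed triple $\{y_a,\cdot,y_n\}$, while your approach is more explicit about what the extension actually is on $A_a^{**}$ (namely $U_{U_{z_B}y}\,J^{**}$) and makes the identification of $\sigma(A_a^{**},A')$ with the restricted $\sigma$-weak topology transparent.
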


\begin{proof}
If $f$ is linear then there is an element $y\in B_+^\circ$ and a Jordan isomorphism $J\colon A\to B$ such that $f=U_y J$ by Proposition~\ref{p:order_isomorphism}. Since the adjoint of $U_yJ$ is an order isomorphism between the duals of $B$ and $A$, it must map $B'$ bijectively onto $A'$. 
If we denote this restriction by $(U_yJ)'$, then its adjoint $(U_yJ)'^*$ in turn, is a bounded linear bijection from $A^{**}_a$ onto $B^{**}_a$, which must be a homeomorphism with respect to the $\sigma(A_a^{**},A')$-topology and the $\sigma(B_a^{**},B')$-topology. As the points of a JB-algebra are separated by the pure states, we conclude that $(U_yJ)'^*$ is an extension of $f$.  

Conversely, suppose that $f$ extends to a homeomorphism $\hat{f}\colon (A_a^{**})_+\to (B_a^{**})_+$ with respect to the $\sigma(A_a^{**},A')$-topology and the $\sigma(B_a^{**},B')$-topology. Then $\hat{f}$ is an order isomorphism by Proposition~\ref{extoiso} and as $A_a^{**}$ does not contain any central atoms by Lemma~\ref{l:codimension1}, it must be linear by Theorem~\ref{thrm,ajbw}. 
\end{proof}

The condition in Theorem \ref{t:main theorem jb} of the JB-algebra not having any norm closed ideals of codimension one is necessary. Indeed, if we consider the JB-algebra $A$ defined in \eqref{e:alg example}, then we can define a non-linear order isomorphism on $A_+$ that does extend to a $\sigma(A^{**}_a,A')$-homeomorphism on the atomic part of its bidual. Indeed, let $\lambda\colon[0,1]\to\mathbb{R}_+$ be a non-constant strictly positive continuous map such that $\lambda(t)=1$ on $(\frac{1}{2},1]$. Define the map $f\colon A_+\to A_+$ by $f(x)(t):=x(t)^{\lambda(t)}$. Since taking a coordinate-wise strictly positive power is an order isomorphism on $\mathbb{R}_+^2$, the map $f$ defines an order isomorphism. However, $f$ is not homogeneous and therefore not linear, and $f$ extends to a $\sigma(A^{**}_a,A')$-homeomorphism $\hat{f}\colon (A^{**}_a)_+\to (A^{**}_a)_+$ by the same formula that defines $f$.

\end{document}